\newcommand*\diff{\mathop{}\!\mathrm{d}}
\newcommand{\R}{\mathbb{R}}
\newcommand{\T}{\top}
\newcommand{\orthcomp}{\bot}
\newtheorem{corollary}{Corollary}
\newtheorem{lemma}{Lemma}
\newtheorem{proposition}{Proposition}
\title{An observer for partially obstructed wood particles in industrial drying processes}
\author[a]{Marc Oliver Berner} 
\author[b]{Viktor Scherer} 
\author[a]{Martin M\"onnigmann\thanks{Corresponding author. E-mail address: martin.moennigmann@rub.de (M. M\"onnigmann).}} 
\affil[a]{Ruhr-Universit\"at Bochum, Automatic Control and Systems Theory, 
Universit\"atsstr. 150, 44780 Bochum, Germany}% \\e-mail: $\lbrace$marc-oliver.berner,martin.moennigmann$\rbrace$@rub.de}
\affil[b]{Ruhr-Universit\"at Bochum, Energy Plant Technology,\\ Universit\"atsstr. 150, 44780 Bochum, Germany} %\\e-mail: scherer@leat.rub.de}
\date{}
\begin{document}
\maketitle
\vspace{-.5cm}
\begin{abstract}   
In order for biomass drying processes to be efficient, it is crucial to achieve the target residual water content within a close margin, 
since more conservative drying would result in a waste of energy. 
A method for a reliable estimation of the water content is therefore of obvious importance. Ideally, such a method does not require any expensive sensors. 
%In order for industrial drying processes to be energy-efficient, monitoring the water content of the biomass to be dried is of obvious importance. 
We show reduced order models and extended Kalman filters can be combined to reliably determine the water content and temperature of wood particles
based on only surface temperature measurements. 
The proposed observer works reliably if measurements are only available for parts of a particle face. 
It can therefore still be applied if particle surfaces are partially obstructed, which is a prerequisite for use in industrial processes and units, such as rotary dryers.
The extended Kalman filter uses a reduced order model that is obtained by applying proper orthogonal decomposition and %Karhunen-Loeve-
Galerkin projection to coupled PDEs that model heat conduction and water diffusion in anisotropic particles. In contrast to the original PDE simulation model, the reduced model and the filter based on it are suitable for real time computations and monitoring. 
%Computational fluid dynamic simulations are capable of representing complex industrial processes, such as the drying of combustible biomass, but also have drawbacks due to their complexity. It is not straightforward to use these models for the analysis of system parameters or observer design. We show that reduced order models can overcome this drawback and can be used for these purposes.
\end{abstract}

%===============================================================================

\section{Introduction}

Process monitoring and state observation are of obvious importance for ensuring product quality and energy efficiency. 
In the biomass drying processes treated here, the particle moisture must be monitored to determine when the biomass is sufficiently dry for further processing. 
%During the drying of combustible biomass for example, it is of interest to monitor the moisture content inside the wood particles to assess whether the wood is dry enough for further processing. 
While the particle surface temperature can relatively easily be measured with infrared thermography, the particle moisture cannot be measured directly. 
There exist approaches that use humidity-indicating substances or determine the moisture with balances, but these methods are not practical in an industrial environment \citep{Sudbrock2015}.  

Dynamic models are instrumental to estimate unmeasured quantities in transient processes. 
%A wide variety of models have been proposed for particle drying, 
%Lumped models are not appropriate for the drying processes of the type treated here, but 
Modeling the drying processes of the type treated here leads to models in the form of coupled partial differential equations (PDEs) (see, e.g., \cite{Sudbrock2015,RuizLopez2011,DeTemmerman2009}). 
Solving these PDE with finite volume methods is state of the art today. When it comes to model-based observers, however, 
it is often more effective to use model reduction methods first and to apply mature methods for the analysis of ordinary differential equations to the resulting reduced model~\citep{John2010}. 
%Different model reduction methods can be applied to distributed parameter systems, among others, balanced truncation \citep{Lall1999, Hahn2002} and 
We apply proper orthogonal decomposition (POD) and Galerkin projection for this purpose~\citep{Moore1981,Jansen2017, Deane1991}. Model reduction approaches based on these steps proved suitable to describe the dynamic behavior of wood chips \citep{Scherer2016,Berner2017} and for the controllability analysis and optimal control of the drying process \citep{Berner2019}. 

In the present paper, we complement the results of \cite{Berner2019} by implementing an observer and considering the observability of the wood chip drying process. Empirical observability Gramians are used for this purpose \citep{Hahn2003, Lall1999}. Furthermore, we establish an extended Kalman filter to monitor the moisture content during the drying process. 
In contrast to the present paper, \cite{Berner2019} addressed optimal control problems and analyzed the controllability properties of the process for this purpose. Both the present paper and~\cite{Berner2019} are based on the same PDE wood chip and reduced order models, which are summarized in Sections~\ref{sec:WoodChipModelling} and~\ref{subsec:modelreduction}, respectively, as required here. 
Some additional references on observability analysis are given in Section~\ref{sec:problemformulation}.

We introduce the wood chip drying process in Section~\ref{sec:WoodChipModelling}. 
The required observability methods and the derivation of the reduced order model (ROM) are summarized in Sections \ref{sec:problemformulation}
and Section~\ref{sec:solutionformulation}, respectively, as required here and applied to the wood chip drying process in 
Section~\ref{sec:ApplicationToWoodChip}.  %to find appropriate measurement locations for the Kalman filter based state observer presented Section \ref{sec:EKF}. 
A brief conclusion and an outlook are given in Section~\ref{sec:Outlook}.

\section{Wood chip drying model}\label{sec:WoodChipModelling}
A single wood particle can be characterized by its inner-particle heat and moisture distribution, which essentially change due to the evaporation of water at the particle surface. The process must be resolved on the single particle scale to take anisotropic material properties into account \citep{Sudbrock2015,Scherer2016}. A typical wood chip can be assumed to be rectangular and to have dimensions of $\unit[10]{mm}\times\unit[20]{mm}\times\unit[5]{mm}$. 

We summarize the PDE and its boundary and initial conditions in Table~\ref{tb:PDE} and only point out some features of the model, since it has been discussed in detail before~\citep{Berner2019,Scherer2016}.
The transient, inner particle behavior is modeled with Fick's law of diffusion and Fourier's law of heat conduction, which yield~\eqref{eqn:PDEX} and~\eqref{eqn:PDET}, respectively.
The volumetric heat capacity $s(x)$ and the diffusion coefficients $\lambda(x)$ and $\delta(T)$ depend on the local temperature or moisture at spatial location $y$ and time $t$. Note that $\lambda(x(y, t))\in\R^{3 \times 3}$ and $\delta(T(y, t))\in\R^{3 \times 3}$ due to the anisotropy of the wood. 
Heat flux and mass flux due to evaporation at the particle surface $\partial\Omega$ are modeled with Neumann boundary conditions~\eqref{eqn:PDEBC}, which depend on the inner-particle temperature and moisture distribution, the ambient humidity $\varrho_\infty$ and the ambient temperature $T_\infty$.
Mass flux is induced by mass transfer $J_{x}(x,T,\varrho_\infty)$ to the ambient air. 
Heat flux results from heat transfer $J_{T}(x,T,T_\infty)$ from the hot ambient air and accounts for the heat $J_{T,\mathrm{v}}(x,T,\varrho_\infty)$ required for evaporation (see \cite{Berner2019} Sect.~2 and App.~A for details).
\begin{table}[t]\footnotesize
\caption{PDE, boundary and initial conditions for moisture $x(y,t)$ and temperature $T(y, t)$ at location $y$ and time $t$.}\label{tb:PDE}
Partial differential equations: 
\begin{subequations}
\label{eqn:PDE}
\begin{align}
\frac{\partial x}{\partial t} &= \nabla\big( {\delta}(T) \nabla x\big) \label{eqn:PDEX}\\
\frac{\partial T}{\partial t} &= s^{-1}(x) \nabla \big( {\lambda}(x) \nabla T \big)
\label{eqn:PDET}
\end{align}
\end{subequations}
Boundary conditions:\begin{subequations}
\label{eqn:PDEBC}
\begin{align}
n^\T \big(\delta(T) \nabla x \big) \Big \vert_{\partial \Omega} &= J_{x}(x,T,\varrho_\infty) \label{eqn:PDEBCX}\\
n^\T \big( \lambda(x) \nabla T\big) \Big \vert_{\partial \Omega}  &= J_{T}(x,T,T_\infty) + J_{T,\mathrm{v}}(x,T,\varrho_\infty) \label{eqn:PDEBCT}
\end{align}
\end{subequations}
Initial conditions: \begin{subequations}
\label{eqn:PDEInitCond}
\begin{align}
x(y,t=0)&= x_0\;\; \text{for all }y\\
T(y,t=0)&=T_0 \;\; \text{for all }y
\end{align}
\end{subequations}
\end{table}

Solving the PDE with boundary and initial conditions (cf. Table~\ref{tb:PDE}) yields the temperature $T(y,t)$ and moisture $x(y,t)$ at time $t$ and spatial location $y \in \Omega$, where $\Omega \subset \R^3$ is the wood chip domain. 
The overall moisture 
\begin{align}\label{eqn:TotalMoisture}
X(t) = \frac{1}{V}\int_\Omega x(y,t) \diff V,
\end{align}
where $V$ refers to the wood chip volume, serves as a measure for the progress of the drying process. More details on the model \eqref{eqn:PDE}-\eqref{eqn:PDEBC}, the finite volume simulation used to solve \eqref{eqn:PDE}-\eqref{eqn:PDEInitCond} and simulation results can be found in \cite{Sudbrock2014,Sudbrock2015,Scherer2016}. 

We showed in~\cite{Berner2019} that the drying process of a single wood chip can be carried out in an energy-optimal fashion if the ambient temperature $T_\infty$ is controlled as a function of time. The energy-optimal control input functions for $T_\infty$ are determined in~\cite{Berner2019}, however, under the assumption that the temperature $T(x, t)$ and moisture $x(y, t)$ can be determined for all points $y\in\Omega$ and all times $t$. 
In the present paper, we show that these very strong assumptions, which can only be met in simulations but not in practical processes, can be dropped. 

%We show an extended Kalman filter can be used to estimate the temperature and moisture distribution inside the entire wood chip based on temperature measurements on the particle surface only (and no meausurements of moisture at all). 

\section{Problem formulation}\label{sec:problemformulation}

The surface temperature distribution of particles in dryers can be measured with infrared thermography~\citep{Rickelt2013}. %Infrared thermography can be used to measure the surface temperature distribution of particles in dryers \citep{Rickelt2013}. 
The moisture, in contrast, cannot be measured directly. In \cite{Moreton2002} for example, the surface particle moisture is determined by coating the particle with a humidity-indicating substance that changes color. 
In laboratory experiments, precision balances are used to determine the moisture of the entire bulk gravimetrically. Alternatively, multiple sensors (humidity, temperature and flow rate sensors) in the air in- and outlet of the dryer can be used to determine the moisture in the bulk via the law of mass conservation for the air humidity \citep{Bengtsson2008}. 
All measurement methods mentioned so far incur considerable additional cost. It is therefore of interest to find a model-based method that requires as few measurements as possible. Since temperature measurements are in principle simpler than moisture measurements, any method that avoids the latter is to be preferred. 

We reconstruct the transient moisture and temperature distribution inside a wood chip with an extended Kalman filter for a reduced model. The reduced model results from~\eqref{eqn:PDE} with proper orthogonal decomposition and Galerkin projection. Because the extended Kalman filter is based on a reduced model, it can provide the desired inner-particle moisture and temperature distribution in real-time at a small computational cost.  
The filter only requires particle tracking and measurements of the particle surface temperature, which can be achieved by coupling particle tracking velocimetry and infrared thermography~\citep{Tsuji2010}, for example.
The temperature on the particle surface is not required for all points on the particle surface, but measurements on part of a single face of the particle suffice. Consequently, it can still be applied if part of the particle is covered by neighboring particles.  
We note Kalman filters have been used for state estimation of other distributed parameter systems with a complexity similar to the one treated here before (see, e.g., \cite{John2010, Hoepffner2005}). 

As a preparation, we show that the moisture and temperature distribution inside a single wood chip, i.e., the solution to the PDEs~\eqref{eqn:PDE}  
%only temperature measurements suffice to reconstruct moisture and temperature distribution inside a single wood chip, i.e., PDE \eqref{eqn:PDE} 
subject to boundary conditions \eqref{eqn:PDEBC},
can be reconstructed from only a partial measurement of the particle-surface temperatures. This preparatory step corresponds to an observability analysis. 
Various methods are available to analyze the observability of nonlinear distributed parameter systems such as \eqref{eqn:PDE}-\eqref{eqn:PDEInitCond}. Some of them directly consider the PDEs with semigroup theory methods \citep{Zuazua2007,Delrattre2004}. Other approaches analyze the observability of the finite-dimensional approximation that results for spatial discretization \citep{Leon2002, Hahn2003}. Mature observability criteria exist for linear finite-dimensional systems \citep{Chen1999}. However, the discretization required for their application to the drying process results in large systems; a few thousand state variables are used for a single particle in the example of Section~\ref{sec:ApplicationToWoodChip}. 
%We will see in Section~\ref{sec:ApplicationToWoodChip} that $M=2000$ states are used.
We apply a model reduction to obtain a finite-dimensional system with an appropriate precision but much smaller size than the original spatially discretized model. 

Before turning to the model reduction in Section~\ref{sec:solutionformulation}, 
the required background on observability analysis methods is summarized in Section~\ref{subsec:CCM}.
Since a large range of ambient conditions needs to be covered, a linearized model and a linear observability analysis are not appropriate. We carry out a nonlinear observability analysis based on empirical Gramians~\citep{Lall1999, Hahn2003}. 

\subsection{Empirical observability Gramian}\label{subsec:CCM}
The observability analysis requires simulation results for \eqref{eqn:PDE}-\eqref{eqn:PDEInitCond}. 
We derive a finite volume model of \eqref{eqn:PDE} for this purpose as follows. The wood chip domain $\Omega$ is tessellated using a Cartesian grid of $N$ cubic finite volumes $\Delta V$, where the $i$th finite volume belongs to location $y_i\in\Omega$, $i=1,\ldots N$. 
The moisture $x(y, t)$ and temperature $T(y, t)$ that result for~\eqref{eqn:PDE} are approximated by the moisture and temperature of the respective finite volume, which we denote $x(y_i,t)$ and $T(y_i,t)$. Gradients $\nabla x(y_i,t)$ and $\nabla T(y_i,t)$ are determined by balancing heat and mass fluxes through each finite volume $\Delta V$. Heat and mass fluxes at the particle surface are replaced by the respective boundary conditions, i.e., by substituting $x(y_i,t)$ and $T(y_i,t)$ in \eqref{eqn:PDEBC}. 
The measured surface temperatures in $\delta\Omega_w$ can then be expressed by the temperatures $T(y_i, t)$ of the surface volume elements. 
%For more details on the finite volume method, we refer to \cite{Moukalled2015, Eymard2000, Fletcher1984} and to \cite[pp.~45]{Sudbrock2014} for the application to the considered drying process. 
We refer to \cite{Moukalled2015, Eymard2000, Fletcher1984} for more details on the finite volume method. 

We collect all $x(y_i,t)$, $T(y_i,t)$ for $i=1,\ldots, N$ in the state vector
\begin{align}
z(t)&=[x(y_1,t) \hdots  x(y_N,t)  \,T(y_1,t)  \hdots  T(y_N,t)]^\T \label{eqn:StateVectorCFD}
\end{align}
for convenience, where $z(t)\in\R^{M}$, with $M=2N$. This yields the discretized model 
\begin{subequations}\label{eqn:NonlinSysAllg}
\begin{align}
\dot{z}(t)&=f\big(z(t)\big)\label{eqn:NonlinSysStateEq}\\
w(t)&=h\big(z(t)\big)\label{eqn:MeasuredOutput},
\end{align}
\end{subequations}
where the function $h:\R^M\rightarrow\R^v$ assigns the particle surface temperatures 
to the output $w(t)\in\R^v$, $v\ll M$. 
The functions $h$ and $w$ will be used to model the measurement at a single point on the particle surface ($v= 1$)
and to model the measurement on a partial particle face in Section~\ref{subsec:OBSVDryingProcess}.

The observability check for \eqref{eqn:NonlinSysAllg} is carried out as follows~\citep{Hahn2003}. Assume $z_\mathrm{ss}$ is a steady state, $f\big(z_\mathrm{ss}\big) = 0$.
We record the output $w_{dli}(t)=h\big(z_{dli}(t)\big)$ that results for the step input
\begin{align}\label{eqn:GramInit}
z_0=h_d D_l e_i +  z_\mathrm{ss},
\end{align}
where $h_d\in\R^+$ are positive constants, $D_l\in\R^{M \times M}$ are orthonormal matrices, 
and $e_i\in\R^M$, $i=1,\ldots,M$ refers to the $i$th standard unit vector. 
We can then determine the empirical observability Gramian
\begin{align}
\label{eqn:Gram}
G_o=\sum_{l=1}^r \sum_{d=1}^s\frac{1}{rsh_d^2}\int_0^\infty   D_l \Psi_{dl} D_l^\T \diff t,
\end{align}
where $G_o \in \R ^{M \times M}$ is symmetric by construction.
The entry $i,j$ of the covariance matrix $\Psi_{dl}$ is defined by its elements
\begin{align}
\label{eqn:GramCov}
\Psi_{dl,ij}=\big(w_{dli}(t)-w_{\mathrm{ss},dli}\big)^\T\big(w_{dlj}(t)-w_{\mathrm{ss},dlj}\big)
\end{align}
 where $w_{\mathrm{ss},dli}= \lim\limits_{t \to \infty} w_{dli}(t)$ accounts for nonzero steady states.

The Gramian~\eqref{eqn:Gram} is called \textit{empirical} observability Gramian, because it depends on the choice of the signals $h_d D_l e_i$ in~\eqref{eqn:GramInit}. To account for nonlinearity, 
\eqref{eqn:Gram} must be calculated for various input magnitudes $h_d$, $d = 1, \dots, s$ 
and perturbation directions $D_l$, $l = 1, \dots, r$. The signals $h_d D_l e_i$ must reflect the signals that occur in the actual process operation~\citep{Hahn2002}. They are selected in Section~\ref{subsec:GramROM} for the particular drying process treated
here.
The empirical observability Gramian \eqref{eqn:Gram} equals the linear observability Gramian if the system \eqref{eqn:NonlinSysAllg} is linear \cite[Lemma 7]{Lall1999}.
We refer to \cite{Moore1981} and \cite{Hahn2003} for details on the observability analysis with empirical Gramians.

Note that a total of $s\cdot r\cdot M$ simulations are necessary to determine \eqref{eqn:Gram}. It is the point of the model reduction to replace the large  system dimension $M$ of the finite volume model by a small system dimension of a reduced order model thus reducing the computational effort of the observability analysis in particular. 

A statement on global observability is not available for nonlinear systems in general, but the following Lemmata are valid locally~\citep{Lall1999}. Let $\beta_i$, $i=1,\ldots,M$ refer to the eigenvalues and $v_i$ to the associated eigenvectors of the eigenvalue problem
\begin{align}\label{eqn:EigProblemLarge}
G_o v_i - \beta_i v_i = 0.
\end{align}

\begin{lemma}\label{lma:LinearObsvCond} (see, e.g., \cite[Chapter 6.3]{Chen1999}) Assume the system \eqref{eqn:NonlinSysAllg} to be linear and stable. Then \eqref{eqn:NonlinSysAllg} is observable with respect to output \eqref{eqn:MeasuredOutput} if and only if $\beta_i>0$ for all $i=1,\ldots,M$, i.e., if and only if the linear observability Gramian is positive definite.
\end{lemma}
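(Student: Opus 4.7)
The plan is to reduce the empirical Gramian \eqref{eqn:Gram} in the linear setting to the classical linear observability Gramian and then invoke the textbook equivalence between positive definiteness of that Gramian and observability of the pair $(A,C)$. Throughout, write the linearisation of \eqref{eqn:NonlinSysAllg} as $\dot z = A z$, $w = C z$, and let $z_{\mathrm{ss}}=0$ without loss of generality by a shift of coordinates.

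First, I would appeal to \cite[Lemma 7]{Lall1999}, which is cited explicitly in the paragraph preceding the lemma: when $f(z)=Az$ and $h(z)=Cz$, the double sum in \eqref{eqn:Gram} together with \eqref{eqn:GramCov} collapses, irrespective of the choice of amplitudes $h_d$ and orthonormal perturbation directions $D_l$, to
\begin{align*}
G_o = \int_0^\infty e^{A^\T t} C^\T C\, e^{At}\diff t.
\end{align*}
The Hurwitz property implied by the stability hypothesis ensures exponential decay of the integrand, hence convergence of the integral, and by construction $G_o = G_o^\T \succeq 0$, so all $\beta_i$ in \eqref{eqn:EigProblemLarge} are real and non-negative.

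Second, I would exploit the quadratic-form identity
\begin{align*}
z_0^\T G_o z_0 = \int_0^\infty \bigl\|C e^{At} z_0\bigr\|_2^2\diff t,
\end{align*}
which shows that $z_0\in\ker G_o$ if and only if the free output $w(t)=Ce^{At}z_0$ vanishes identically on $[0,\infty)$. Because $t\mapsto Ce^{At}z_0$ is real-analytic, the latter is equivalent to $CA^k z_0 = 0$ for $k=0,\ldots,M-1$, i.e., $z_0$ lies in the kernel of the Kalman observability matrix.

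Finally, I would close the loop through the Kalman rank criterion \cite[Chapter~6.3]{Chen1999}: the pair $(A,C)$ is observable if and only if this kernel is trivial, which by the previous step is equivalent to $\ker G_o=\{0\}$, i.e., to $G_o\succ 0$, i.e., to $\beta_i>0$ for every $i=1,\ldots,M$. The only delicate step is the reduction in the first paragraph, since one has to confirm that averaging over the perturbation amplitudes $h_d$ and directions $D_l$ in \eqref{eqn:Gram} really recovers the symmetric linear Gramian; as this is precisely the content of \cite[Lemma~7]{Lall1999}, the result reduces to the classical finite-dimensional statement after that reduction.
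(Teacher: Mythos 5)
Your argument is correct: the reduction of the empirical Gramian \eqref{eqn:Gram} to $\int_0^\infty e^{A^\T t}C^\T C e^{At}\diff t$ for linear $f,h$ (using $D_lD_l^\T=I$ and the cancellation of $h_d^2$) is exactly the content of \cite[Lemma~7]{Lall1999} that the paper invokes in the preceding paragraph, and the remaining chain (quadratic-form identity, analyticity plus Cayley--Hamilton, Kalman rank criterion) is the standard textbook equivalence. The paper itself offers no proof of this lemma --- it is stated as a known result with a citation to \cite[Chapter~6.3]{Chen1999} --- so your proposal simply supplies, correctly, the argument the paper defers to its references.
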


\begin{lemma}\label{lma:ObsvSubspace} (see, e.g., \cite{Moore1981}) Let $\beta_k$ and $v_k$, $k=1,\ldots,M$ be the eigenvalues and associated eigenvectors of \eqref{eqn:Gram} for a stable linear system. Then all initial points in the state space that result in an output energy $\int_0^\infty y^\T(\tau) y(\tau) \diff\tau \leq 1$ are located within a hyper-ellipsoid with semi axes $\nicefrac{v_k}{\sqrt{\beta_k}} $, $k=1,\ldots,M$.
\end{lemma}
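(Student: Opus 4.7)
The plan is to reduce the statement to a standard quadratic-form fact about symmetric positive semidefinite matrices. The starting point is that for a linear stable system of the form $\dot z = A z$, $y = C z$ (i.e., the linearization of \eqref{eqn:NonlinSysAllg} about $z_\mathrm{ss}$ with zero exogenous input), the free response from an initial state $z_0$ is $y(t) = C e^{A t} z_0$, so the output energy evaluates to
\[
\int_0^\infty y^\T(\tau) y(\tau)\,\diff\tau \;=\; z_0^\T \left(\int_0^\infty e^{A^\T \tau} C^\T C\, e^{A\tau}\,\diff\tau\right) z_0 \;=\; z_0^\T G_o z_0,
\]
where the matrix in parentheses is precisely the linear observability Gramian, which coincides with the empirical Gramian \eqref{eqn:Gram} for linear systems by the cited result of \cite{Lall1999}.

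Next I would invoke the spectral theorem: since $G_o$ is symmetric by construction and positive semidefinite (positive definite under the hypotheses of Lemma~\ref{lma:LinearObsvCond}), we may write $G_o = V \Lambda V^\T$ with $V = [v_1,\ldots,v_M]$ orthonormal and $\Lambda = \mathrm{diag}(\beta_1,\ldots,\beta_M)$. Introducing new coordinates $\xi = V^\T z_0$, the energy bound translates into $\sum_{k=1}^M \beta_k \xi_k^2 \le 1$, which is the canonical equation of a (possibly degenerate) solid hyper-ellipsoid with semi-axes $1/\sqrt{\beta_k}$ along the new coordinate directions.

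Transforming back to the original coordinates, the $k$th principal direction is $v_k$ and its length is $1/\sqrt{\beta_k}$, which yields the semi-axes $v_k/\sqrt{\beta_k}$ stated in the lemma. One subtlety worth flagging is the degenerate case $\beta_k = 0$, which corresponds to an unobservable mode: the set then becomes an unbounded cylinder along $v_k$, consistent with interpreting $v_k/\sqrt{\beta_k}$ as an infinite semi-axis. The only non-mechanical step is recognizing that the empirical Gramian \eqref{eqn:Gram} collapses to the standard linear observability Gramian under the linearity hypothesis; everything else is a routine diagonalization of a symmetric positive semidefinite quadratic form, so I do not anticipate a substantive obstacle.
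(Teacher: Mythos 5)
The paper does not prove this lemma itself---it is quoted from Moore (1981)---so there is no in-paper argument to diverge from. Your proof is the standard and correct one: reduce the output energy to the quadratic form $z_0^\T G_o z_0$ via the linear observability Gramian (using the cited fact that the empirical Gramian coincides with it in the linear case), diagonalize, and read off the semi-axes $v_k/\sqrt{\beta_k}$; your remark on the degenerate $\beta_k=0$ case is an appropriate caveat.
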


Essentially, the eigenvalues, where we assume $\beta_1 \geq \ldots \geq \beta_M$ without restriction, and their corresponding eigenvectors $v_1, \ldots , v_M $ specify the range and direction of the most observable states. Note that the smallest eigenvalue refers to the least observable state but results in the major semi-axes of the hyper-ellipsoid.

Furthermore, we use the observability measure 
\begin{align}\label{eqn:ObsvMeausure}
\kappa = \mathrm{trace}\big( G_\text{o} \big) = \sum_{i=1}^{M} \beta_i
\end{align}
that measures how much of the state perturbation is transmitted to the output $w(t)$ (see, e.g., \cite{Singh2005}). The measure $\kappa$ is ultimately used to determine appropriate measurement locations for the Kalman filter in Section \ref{sec:EKF}.

It is impractical to determine the observability Gramian with the finite-volume model, 
because this would require to carry out $s\cdot r\cdot M$ simulations, where $M=2000$ applies here (see Section~\ref{sec:ApplicationToWoodChip} and note this value for $M$ is not particularly large). 
The model reduction technique summarized
in the subsequent section permits to reduce the number of states by two
orders of magnitude.\footnote{Calculating \eqref{eqn:Gram} with the reduced order model requires $\unit[1155]{s}$ with a matlab implementation on a desktop PC (Intel i7-6700 CPU, $\unit[3.4]{GHz}$, $\unit[64]{GiB}$ RAM). The corresponding calculation with the finite volume-model was incomplete after one day.}
Specifically, a reduced order model with ten states will
turn out to be sufficient.

\section{Solution formulation}\label{sec:solutionformulation}
We summarize the required aspects of the model reduction method in Section~\ref{subsec:modelreduction} and explain how to use it for the observability analysis in Section~\ref{subsec:GramROM}.

\subsection{POD and Galerkin projection based reduction}\label{subsec:modelreduction}
We briefly introduce the model reduction procedure as required here and refer to \cite{Berner2017, Scherer2016} for more details. POD and Galerkin projection \citep{Sirovich1987} are applied to \eqref{eqn:PDE}--\eqref{eqn:PDEInitCond}. All explanations use \eqref{eqn:PDET} as an example, \eqref{eqn:PDEX} can be treated analogously. In the explanation to follow, we assume the material parameters $s$ and $\lambda$ in \eqref{eqn:PDET} to be constant for simplicity. We stress that the actual model reduction in Sections \ref{sec:ApplicationToWoodChip} and \ref{sec:EKF} are performed with the non-constant quantities.

The model reduction is based on so called snapshots
\begin{align}\label{eqn:SnapshotsT}
z_T(t_j)&=[T(y_1,t_j) \hdots  T(y_N,t_j)]^\T
\end{align}
$z_T(t_j)\in\R^{N}$ that solve or approximately solve \eqref{eqn:PDET} at times $t_j$, $j=1,\ldots,m$ and spatial locations $y_i\in\Omega$, $i=1,\ldots,N$ for boundary conditions \eqref{eqn:PDEBC} and initial conditions \eqref{eqn:PDEInitCond}. Assume $b$ linear independent snapshots exist. We can then find $b$ orthonormal basis vectors  $\phi_{T,k}=[\varphi_{T,k}(y_1) \hdots  \varphi_{T,k}(y_N) ]^\T $, $\phi_{T,k}\in\R^{N}$, $k=1,\ldots,b$, of the snapshot set, also called modes, such that 
\begin{align}\label{eqn:SumLinComb}
T(y_i,t_j)&= \bar{T}(y_i) + \textstyle\sum_{k=1}^{b} c_{T,k}(t_j) \varphi_{T,k}(y_i),
\end{align} 
where 
\begin{align}\label{eqn:SnpMean}
\bar{T}(y_i) = \tfrac{1}{m}\textstyle\sum_{j=1}^{m} T(y_i,t_j), \in\R,
\end{align}  
is the time average and
\begin{align}\label{eqn:TimeCoeff}
c_{T,k}(t_j)&= \textstyle\sum_{i=1}^{N} \big(T(y_i,t_j)- \bar{T}(y_i)\big) \, \varphi_{T,k}(y_i) \Delta V \nonumber\\&=\langle  z_T(t_j)- \bar{z}_T,\, \phi_{T,k} \rangle, \in\R
\end{align} 
are time dependent coefficients, where $\bar{z}_T=[\bar{T}(y_1),\ldots,\bar{T}(y_N)]^\T$. The brackets $\langle \cdot, \, \cdot \rangle$ denote the inner product 
%\begin{align}\label{eqn:InnerProductL2}
%\langle a(y_i), b(y_i)\rangle &=\textstyle\sum_{i=1}^{N} a(y_i)\,b(y_i) \Delta V,
%\end{align}
%for $a(y_i),b(y_i):\, \Omega \rightarrow \R$ and the discrete volume $\Delta V \in \R$. In the observability analysis, we will use the vectorial form of \eqref{eqn:InnerProductL2}, i.e.,
\begin{align}\label{eqn:InnerProductL2Vector}
\langle a, b\rangle &=a^\T b \Delta V
\end{align}
for $a=[a(y_1),\ldots,a(y_N)]^\T$, $b=[b(y_1),\ldots,b(y_N)]^\T$, $a,b\in\R^N$ and the discrete volume $\Delta V \in \R$.

An approximation for \eqref{eqn:SumLinComb} results by truncating at a cut-off value $n_T \ll b$. The left singular vectors that result from applying singular value decomposition to the set of snapshots constitute suitable modes in the sense that 
\begin{align}\label{eqn:SumLinApprox}
T(y_i,t_j)&\approx \bar{T}(y_i) + \textstyle\sum_{k=1}^{n_T} \varphi_{T,k}(y_i) c_{T,k}(t_j)
\end{align}
yields the best approximation for a given $n_T$~\citep{Sirovich1987,Cordier2008a,Cordier2008b}. The cut-off value $n_T$ should be chosen as small as possible, since $n_T$ corresponds to the order of the reduced model. The singular values $\sigma_1 > \ldots > \sigma_b$ provide a measure 
\begin{align}
E(n_T)=\frac{\sum_{i=1}^{n_T} \sigma_i}{\sum_{j=1}^{b} \sigma_j}
\label{eqn:Energy}
\end{align}
for the quality of \eqref{eqn:SumLinApprox}. The measure \eqref{eqn:Energy} is used to evaluate whether $n_T$ is chosen appropriately, where any $n_T$ that results in $E(n_T) \approx 1$ is appropriate \citep[Ch.~3.6]{Cordier2008a}.

The desired reduced model consists of $n_T$ ordinary differential equations 
\begin{align}\label{eqn:PDEProjection2}
\dot{c}_{T,k}(t_j) &\approx \textstyle\sum_{i=1}^{N} \varphi_{T,k}  \bigg( s^{-1} \nabla \cdot  \Big( {\lambda} \nabla \big(\bar{T}(y_i)+ \nonumber\\ &\textstyle\sum_{l=1}^{n_T} \varphi_{T,l}(y_i)c_{T,l}(t_j)\big) \Big) \bigg)  \Delta V,
\end{align}
for $c_{T,k}(t)$, $k=1,\ldots,n_T$, that approximate \eqref{eqn:TimeCoeff} at $t=t_j$. The ODEs are constructed by substituting \eqref{eqn:SumLinApprox} into \eqref{eqn:PDET}, applying the inner product, and exploiting the time independence and orthonormality of the modes,
\begin{align}\label{eqn:OrthoNormality}
\textstyle\sum_{i=1}^{N}  \varphi_{x,l}(y_i) \;  \varphi_{x,k}(y_i)  \Delta V  =\delta_{l,k}
\end{align}
where $\delta_{l,k}$ refers to Kronecker's delta~\citep{Berner2017, Scherer2016}. 

Equations \eqref{eqn:PDEProjection2} are the desired ODEs, i.e., the reduced order model (ROM) for temperature diffusion. We explicitly consider the boundary conditions \eqref{eqn:PDEBC} in the reduced order model with the help of Gauss's theorem \citep{Berner2017} and denote the resulting equations
%We omit details of this step and state the reduced order model that considers boundary conditions in \ref{appx:A} for completeness. For brevity, we write
 \begin{align}
\dot{c}_{T,l}(t) &= f_{\text{ROM,T},l}\big(c_{x,k}(t),c_{T,l}(t)\big). \label{eqn:PDEProjectionT}
\end{align}
The structure of $f_{\text{ROM, T},l}$ is briefly explained in Appendix A.

We repeat the model reduction procedure for the moisture diffusion \eqref{eqn:PDEX} with the moisture approximation
\begin{align}\label{eqn:SumLinApproxWater}
x(y_i,t_j)&\approx \bar{x}(y_i) + \textstyle\sum_{k=1}^{n_x} \varphi_{x,k}(y_i) c_{x,k}(t_j),
\end{align}
where $\bar{x}(y_i)$, $n_x$, $\varphi_{x,k}(y_i) $ and $c_{x,k}(t_j)$, $i=1,\ldots,N$, $j=1,\ldots,m$, $k=1,\ldots,n_x$ are the time average, cut-off value, modes and time coefficients, respectively, that were determined with the methods presented above from snapshots for the moisture. Applying Galerkin projection and Gauss's theorem to \eqref{eqn:PDEX} yields the desired ODEs which we denote
 \begin{align}
\dot{c}_{x,l}(t) &= f_{\text{ROM,x},l}\big(c_{x,k}(t),c_{T,l}(t)\big), \label{eqn:PDEProjectionX}
\end{align}
where $f_{\text{ROM,x},l}$ is stated in Appendix A. 
The $n_x+n_T=n$ ODEs 
 \begin{align}\label{eqn:PODGalModel}
\dot{c}(t) &= \begin{bmatrix}
f_{\text{ROM,x},1}\big(c_{x,k}(t),c_{T,l}(t)\big)\\
\vdots\\
f_{\text{ROM,x},n_x}\big(c_{x,k}(t),c_{T,l}(t)\big)\\
f_{\text{ROM,T},1}\big(c_{x,k}(t),c_{T,l}(t)\big)\\
\vdots\\
f_{\text{ROM,T},n_T}\big(c_{x,k}(t),c_{T,l}(t)\big)\\
\end{bmatrix} =f_\text{ROM}\big(c(t) \big)
\end{align}
constitute the desired reduced order model for the drying process \eqref{eqn:PDE}, where
\begin{align}\label{eqn:PODGalStates}
c(t)&=[c_{x,1}(t) \hdots  c_{x,n_x}(t) \, c_{T,1}(t)  \hdots  c_{T,n_T}(t)]^\T ,
\end{align}
$c(t)\in\R^{n}$, refers to the state vector of the reduced order model.

The initial condition for \eqref{eqn:PODGalModel}, i.e., $c_{T,i}(t_0)$ and $c_{x,j}(t_0)$ for $i=1,\ldots,n_T$ and $j=1,\ldots,n_x$, can be taken from decomposition \eqref{eqn:SumLinApprox} and \eqref{eqn:SumLinApproxWater}, respectively, for $t_0$. 
The output \eqref{eqn:MeasuredOutput} is determined as follows.
 Solving \eqref{eqn:PODGalModel} for given initial conditions yields time series $c_{T,i}(t)$ and $c_{x,i}(t)$ that are used to approximate the temperature $T(y_i,t)$ and moisture $x(y_i,t)$ according to \eqref{eqn:SumLinApprox} and \eqref{eqn:SumLinApproxWater}, respectively.
 Collecting \eqref{eqn:SumLinApprox} and \eqref{eqn:SumLinApproxWater} as in \eqref{eqn:StateVectorCFD} yields 
 \begin{align}\label{eqn:StateApprox}
z(t)\approx \Phi c(t) + \bar{z},
\end{align}
which can be substituted in \eqref{eqn:MeasuredOutput} to obtain the desired output 
\begin{align}\label{eqn:OutputApproxGeneral}
w(t) &\approx \tilde{w}(t) = h\big(\Phi c(t) + \bar{z}\big).
\end{align}
In \eqref{eqn:StateApprox} and \eqref{eqn:OutputApproxGeneral}, the modes and time averages for temperature and moisture are collected in
\begin{align}\label{eqn:AllModes}
\Phi &= \begin{bmatrix}\begin{smallmatrix} 
\varphi_{x,1}(y_1)&\ldots&\varphi_{x,n_x}(y_1)&0&\ldots&0\\
\ldots&\ldots&\ldots&\ldots&\ldots&\ldots\\
\varphi_{x,1}(y_N)&\ldots&\varphi_{x,n_x}(y_N)&0&\ldots&0\\
0&\ldots&0&\varphi_{T,1}(y_1)&\ldots&\varphi_{T,n_T}(y_1)\\
\ldots&\ldots&\ldots&\ldots&\ldots&\ldots\\
0&\ldots&0&\varphi_{T,1}(y_N)&\ldots&\varphi_{T,n_T}(y_N)\\
\end{smallmatrix}\end{bmatrix} \nonumber\\&=\big[ \phi_1 \;\hdots \; \phi_n \big] \in\R^{M\times n},\\
\bar{z}&=[\bar{x}(y_1) \hdots  \bar{x}(y_N)  \, \bar{T}(y_1)  \hdots  \bar{T}(y_N)]^\T \in\R^M,\nonumber
\end{align}
respectively, where $M$ refers to the order of~\eqref{eqn:NonlinSysAllg}. %and $\phi_i$, $i=1,\ldots,n$ refers to the $i$th column of $\Phi$.

\subsection{Observability Gramians based on reduced order models}\label{subsec:GramROM}
The reduced order model \eqref{eqn:PODGalModel} essentially results in two reductions. First, it is used to approximate eigenvalue problem \eqref{eqn:EigProblemLarge} for the Gramian of the original finite volume model by a much smaller eigenvalue problem. Secondly, the computational effort can be reduced considerably, because the perturbation direction $D_l e_i$ are chosen from the lower-dimensional space that results after the reduction. In fact, the dimension $n$ of the reduced order model is small enough to permit using all modes as perturbation directions.  %$D_l e_i$. 
As a preparation, %we collect all recurring conditions in Assumption~\ref{Asm:AccurateImpResponse} and
we convert the initial conditions for the finite volume model to initial conditions for the reduced order model with Corollary~\ref{lemma:InitialCondition}. 

%\begin{assumption}\label{Asm:AccurateImpResponse}Let $w_{dli}(t)$ be the output response that results 
%for the original finite volume model~\eqref{eqn:NonlinSysAllg} and a given initial condition $z_0$ of the form~\eqref{eqn:GramInit} with steady state $z_\text{ss}$. 
%Let the modes of~\eqref{eqn:SumLinApprox} and \eqref{eqn:SumLinApproxWater} be collected in
%\begin{align}\label{eqn:AllModes}
%\Phi &= \begin{bmatrix}\begin{smallmatrix} 
%\varphi_{x,1}(y_1)&\ldots&\varphi_{x,n_x}(y_1)&0&\ldots&0\\
%\ldots&\ldots&\ldots&\ldots&\ldots&\ldots\\
%\varphi_{x,1}(y_N)&\ldots&\varphi_{x,n_x}(y_N)&0&\ldots&0\\
%0&\ldots&0&\varphi_{T,1}(y_1)&\ldots&\varphi_{T,n_T}(y_1)\\
%\ldots&\ldots&\ldots&\ldots&\ldots&\ldots\\
%0&\ldots&0&\varphi_{T,1}(y_N)&\ldots&\varphi_{T,n_T}(y_N)\\
%\end{smallmatrix}\end{bmatrix} \nonumber\\&=\big[ \phi_1 \;\hdots \; \phi_n \big],
%\end{align}
%$\Phi\in\R^{M\times n}$, where $M$ refers to the order of the original model~\eqref{eqn:NonlinSysAllg} and $\phi_i$, $i=1,\ldots,n$ refers to the $i$th column of $\Phi$. Furthermore, let $\bar{z}$ refer to the combined time averages of the snapshot set~\eqref{eqn:SnpMean} for temperature and moisture, i.e., 
%\begin{align*}
%\bar{z}=[\bar{x}(y_1) \hdots  \bar{x}(y_N)  \, \bar{T}(y_1)  \hdots  \bar{T}(y_N)]^\T,
%\end{align*}
%which implies $\bar{z}\in\R^M$. 
%\end{assumption}

\begin{corollary}\label{lemma:InitialCondition} 
  Consider the original finite volume model~\eqref{eqn:NonlinSysAllg} for the initial condition \eqref{eqn:GramInit} . %$z_0= h_d D_l e_i+ z_\text{ss}$ from Assumption~\ref{Asm:AccurateImpResponse}.
  %, i.e., $z_0$ of the form~\eqref{eqn:GramInit} with steady state $z_\text{ss}$. 
	Let $D^*_l\in\R^{n\times M}$ and  $c_\mathrm{ss} \in\R^n$ be defined by
	\begin{align}
	D^*_{l,ij}&=\langle D_l e_j, \; \phi_i \rangle,  \label{eqn:GramInitPerturb}
	\\
	c_{\mathrm{ss},i} &= \langle  z_\mathrm{ss} - \bar{z}, \; \phi_i \rangle. \label{eqn:GramInitSs}
	\end{align}
	Then
	\begin{align}\label{eqn:GramInitROMHigh}
	c_{0}=h_d D^*_l e_j +  c_\mathrm{ss}  \in\R^n,
	\end{align}
	is the  initial condition for the corresponding reduced order model~\eqref{eqn:PODGalModel}. 
\end{corollary}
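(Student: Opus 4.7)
The plan is to exploit the fact that the ROM coordinate at any time is defined as the inner product of the mean-subtracted full state with the POD modes, and then to invoke linearity of this projection. Equation~\eqref{eqn:TimeCoeff} (applied to both the temperature and moisture blocks, collected as in~\eqref{eqn:PODGalStates}) tells us that given any full-order state $z$, the associated ROM coefficient vector $c \in \R^n$ has components
\begin{align*}
c_i = \langle z - \bar{z}, \; \phi_i \rangle, \qquad i = 1,\ldots,n,
\end{align*}
where $\phi_i$ is the $i$th column of $\Phi$ in~\eqref{eqn:AllModes}. This is the only fact I need beyond the definitions in the statement.

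First I would apply this projection formula to the specific initial state $z_0 = h_d D_l e_j + z_{\mathrm{ss}}$ given in~\eqref{eqn:GramInit}, obtaining
\begin{align*}
c_{0,i} = \langle h_d D_l e_j + z_{\mathrm{ss}} - \bar{z}, \; \phi_i \rangle.
\end{align*}
Next, I would use linearity of the inner product in its first argument to split this into two pieces, and pull out the scalar $h_d$:
\begin{align*}
c_{0,i} = h_d \langle D_l e_j, \; \phi_i \rangle + \langle z_{\mathrm{ss}} - \bar{z}, \; \phi_i \rangle.
\end{align*}
Finally, I would identify the two resulting inner products with the quantities defined in~\eqref{eqn:GramInitPerturb} and~\eqref{eqn:GramInitSs}, namely $D^*_{l,ij}$ and $c_{\mathrm{ss},i}$, so that $c_{0,i} = h_d D^*_{l,ij} + c_{\mathrm{ss},i}$. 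Collecting these components for $i=1,\ldots,n$ yields the claimed vector identity~\eqref{eqn:GramInitROMHigh}.

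There is no real obstacle here; the result is essentially the statement that the Galerkin projection commutes with the affine decomposition $z_0 = h_d D_l e_j + z_{\mathrm{ss}}$, which reduces to bilinearity of $\langle \cdot,\cdot \rangle$. The only thing to be careful about is the index bookkeeping: the matrix $D^*_l \in \R^{n \times M}$ has a row for each POD mode and a column for each finite-volume state, so that $D^*_l e_j \in \R^n$ correctly selects the $j$th column, matching the role of $D_l e_j \in \R^M$ on the full-order side. Once this is unpacked, the proof is a two-line calculation.
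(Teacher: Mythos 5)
Your proposal is correct and follows essentially the same route as the paper's proof: project $z_0-\bar{z}$ onto the modes as in \eqref{eqn:TimeCoeff}, substitute \eqref{eqn:GramInit}, and split by linearity of the inner product to recover $D^*_{l,ij}$ and $c_{\mathrm{ss},i}$. Nothing is missing.
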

\begin{proof}
	As in \eqref{eqn:TimeCoeff}, the reduced model states are obtained by projecting the original states onto the modes. The $i$th entry of $c_0$ reads
	\begin{align*}
	c_{0,i}= \langle z_0- \bar{z}, \; \phi_i \rangle.
	\end{align*}
	Substituting \eqref{eqn:GramInit} yields
	\begin{align}\label{eqn:GramInitROMgeneral}
	c_{0,i}&= \langle h_d D_l e_j +  z_\mathrm{ss} - \bar{z}, \; \phi_i \rangle \nonumber\\
	&= h_d \underbrace{\langle D_l e_j, \; \phi_i \rangle}_{=D_{l,ij}^*} + \underbrace{\langle  z_\mathrm{ss} - \bar{z}, \; \phi_i \rangle}_{=c_{\mathrm{ss},i}}
	\end{align}
	due to the linearity of the inner product.
\end{proof}
Solving the reduced model~\eqref{eqn:PODGalModel} with the initial condition from Corollary~\ref{lemma:InitialCondition} results in the desired approximation $\tilde{w}_{dli}(t)$ of the original output response $w_{dli}$
	\begin{align}\label{eqn:OutputApprox}
	w_{dli}(t) &\approx \tilde{w}_{dli}(t) = h\big(\Phi c_{dli}(t) + \bar{z}\big),
	\end{align}
  where $c_{dli}(t)$ is the solution of the reduced model \eqref{eqn:PODGalModel} for initial condition $c_0$. 
However, if $s\cdot r\cdot M$ initial conditions $z_0$ are constructed for the original model as in Section~\ref{subsec:CCM}, $s\cdot r\cdot M$ initial conditions $c_0$ for the reduced model result with Corollary~\ref{lemma:InitialCondition}. 
Since the initial conditions for the reduced model can only be chosen from an $n$-dimensional space with $n\ll M$, 
it is not efficient to simply transform all initial conditions with~\eqref{eqn:GramInitROMHigh}. We show in Proposition~\ref{Prp:GramROM} that choosing the modes as initial conditions for the original model results in particularly simple $n$ independent initial conditions for the reduced model. 
Corollary~\ref{lem:ExtensionWithFundamentalTheorem} is required as a preparation.
\begin{corollary}\label{lem:ExtensionWithFundamentalTheorem}
	Let the columns of $H=[\eta_1 \hdots\, \eta_{M-n}]$, $H\in\R^{M\times (M-n)}$ be an orthonormal basis for $\mathrm{ker}(\Phi^\T)$ and $l\in\{-1, 1\}$.
	Then 
	\begin{align}\label{eqn:PerturbDirection}
	D_l = \big[ (-1)^l \sqrt{\Delta V}\Phi,\; H\big]
	\end{align}
	is orthonormal with dimension $M\times M$. 
\end{corollary}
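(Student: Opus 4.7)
The plan is to verify that $D_l$ is square and then compute $D_l^\T D_l$ in $2\times 2$ block form, showing it equals $I_M$. Squareness is immediate: $\Phi$ contributes $n$ columns and $H$ contributes $M-n$, so $D_l\in\R^{M\times M}$.

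The key preliminary step is to translate the mode orthonormality from the weighted inner product used for the Galerkin projection into an ordinary matrix identity. Equation~\eqref{eqn:OrthoNormality}, together with the block-diagonal support of $\Phi$ in~\eqref{eqn:AllModes} (moisture modes live in the top $N$ rows, temperature modes in the bottom $N$ rows, so cross products vanish trivially), gives
\begin{align*}
\phi_k^\T \phi_l \,\Delta V = \langle \phi_k, \phi_l\rangle = \delta_{kl},
\end{align*}
i.e., $\Delta V\,\Phi^\T \Phi = I_n$. This is exactly the rescaling that the factor $\sqrt{\Delta V}$ in~\eqref{eqn:PerturbDirection} is designed to absorb.

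With that in hand, I would compute
\begin{align*}
D_l^\T D_l =
\begin{bmatrix} ((-1)^l\sqrt{\Delta V})^2\, \Phi^\T \Phi & (-1)^l\sqrt{\Delta V}\,\Phi^\T H \\[2pt] (-1)^l\sqrt{\Delta V}\, H^\T \Phi & H^\T H \end{bmatrix}.
\end{align*}
The top-left block reduces to $\Delta V\,\Phi^\T\Phi = I_n$ (the sign $(-1)^l$ drops out under squaring, which is why the statement holds for both $l\in\{-1,1\}$). The off-diagonal blocks vanish because every column of $H$ lies in $\ker(\Phi^\T)$, so $\Phi^\T H = 0$ and $H^\T\Phi = 0$. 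The bottom-right block is $I_{M-n}$ by the assumed orthonormality of the columns of $H$. Stacking these yields $D_l^\T D_l = I_M$, which, combined with squareness, gives orthonormality of $D_l$.

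The only genuinely subtle point, and the thing I would flag explicitly in the write-up, is the reconciliation of two different notions of orthonormality in play: the modes $\phi_k$ are orthonormal with respect to the volume-weighted inner product~\eqref{eqn:InnerProductL2Vector}, whereas the kernel basis $H$ is orthonormal in the standard Euclidean sense. The scalar $\sqrt{\Delta V}$ in~\eqref{eqn:PerturbDirection} is precisely the rescaling needed to put $\Phi$ on the same Euclidean footing as $H$, so that both diagonal blocks of $D_l^\T D_l$ collapse to identities simultaneously. Once this compatibility is recognized, the rest of the argument is a short block-matrix calculation.
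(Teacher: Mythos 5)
Your proof is correct and follows essentially the same route as the paper: verify squareness, compute $D_l^\T D_l$ in $2\times 2$ block form, and use $\Delta V\,\Phi^\T\Phi = I_n$ from \eqref{eqn:OrthoNormality}, $H^\T H = I_{M-n}$, and $\Phi^\T H = 0$ from $\mathrm{im}(H)=\ker(\Phi^\T)$. Your explicit remark that $\sqrt{\Delta V}$ reconciles the volume-weighted orthonormality of the modes with the Euclidean orthonormality of $H$ is a helpful clarification, but not a different argument.
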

\begin{proof}
Since $\Phi\in\R^{M\times n}$ and $H\in\R^{M\times (M-n)}$ by construction, we have $D_l\in\R^{M\times M}$. Furthermore, 
\begin{align*}
D_l^\T D_l = \begin{bmatrix} \Delta V \Phi^\T \Phi & (-1)^l\sqrt{\Delta V}\Phi^\T  H \\ (-1)^l\sqrt{\Delta V}H^\T \Phi & H^\T H\end{bmatrix}= I_M
\end{align*}
since $\Delta V \Phi^\T \Phi=I_n$ due to \eqref{eqn:OrthoNormality} and $H^\T H=I_{M-n}$ by construction and,
due to the fundamental theorem of linear algebra, 
$\mathrm{im}(\Phi)^\orthcomp=\mathrm{ker}(\Phi^\T)=\mathrm{im}(H)$, thus $H^\T\Phi=0_{(M-n)\times n}$ and $\Phi^\T H = 0_{n \times (M-n)}$. 
\end{proof}

We use $+\Phi$ and $-\Phi$, i.e., $(-1)^l \Phi$, $l=1,2$, in \eqref{eqn:PerturbDirection} to better account for nonlinear system behavior as proposed in \cite{Lall1999}. 
Having augmented the modes $\Phi\in\R^{M\times n}$ to the orthonormal matrix $D_l\in\R^{M\times M}$ in \eqref{eqn:PerturbDirection}, 
$D_l$ can now serve as a perturbation matrix in \eqref{eqn:Gram}. 
\begin{proposition}\label{Prp:GramROM} Let %Assumption \ref{Asm:AccurateImpResponse} hold, 
$G_o$ be the observability Gramian \eqref{eqn:Gram} that results for arbitrary nonzero $h_d\in\R$ and $D_l$ from \eqref{eqn:PerturbDirection}.
Let
\begin{align}
\tilde{h}_d&=\sqrt{\Delta V}h_d, \, %\Leftrightarrow  h_d = \tfrac{1}{\sqrt{\Delta V}} \tilde{h}_d \label{eqn:GramROMh}\\
\tilde{D}_l = (-1)^l I_n, \hspace{.25cm} l=1,2\label{eqn:GramROMh}
\end{align}
where $I_n$ is the identity matrix of size $\R^{n\times n}$. %let $\tilde{e}_i\in\R^n$ be the $i$th standard unit vector in $\R^n$, let
Furthermore, let 
\begin{align}\label{eqn:GramROM}
W_o=\sum_{l=1}^r \sum_{d=1}^s\frac{1}{rs\tilde{h}_d^2}\int_0^\infty   \tilde{D}_l \tilde{\Psi}_{dl} \tilde{D}_l^\T \diff t,
\end{align}
$W_o \in \R ^{n \times n}$, refer to the observability Gramian for the reduced order model with
\begin{align}\label{eqn:GramROMCov}
\tilde{\Psi}_{dl,ij}=\big(\tilde{w}_{dli}(t)-\tilde{w}_{\mathrm{ss},dli}\big)^\T\big(\tilde{w}_{dlj}(t)-\tilde{w}_{\mathrm{ss},dlj}\big),
\end{align}
where $\tilde{w}_{dli}$ refers to output \eqref{eqn:OutputApprox} for initial condition
\begin{align}\label{eqn:GramROMInit}
c_0=\tilde{h}_d \tilde{D}_l \tilde{e}_i +  c_\mathrm{ss}
\end{align}
$\tilde{w}_{\mathrm{ss},dli}= \lim\limits_{t \to \infty} \tilde{w}_{dli}(t)$ and $c_\mathrm{ss}$ in \eqref{eqn:GramROMInit} is defined in \eqref{eqn:GramInitSs}.

Then $G_o$ can be approximated by
\begin{align}
\tilde{G}_o= \Delta V ^2 \Phi W_o \Phi^\T.\label{eqn:GramApproxSmall}
\end{align}
\end{proposition}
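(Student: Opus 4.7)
The plan is to use the particular block structure of the perturbation matrix $D_l=[(-1)^l\sqrt{\Delta V}\Phi,\,H]$ from Corollary~\ref{lem:ExtensionWithFundamentalTheorem} to split the $M$ initial-condition perturbations of \eqref{eqn:Gram} into $n$ lying in $\mathrm{im}(\Phi)$ and $M-n$ lying in $\ker(\Phi^\T)$. The goal is to show that only the former contribute to $G_o$ (up to the POD approximation) and that their contribution is exactly $\Delta V^2\Phi W_o\Phi^\T$.

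First, for index $j>n$ we have $D_l e_j=\eta_{j-n}\in\ker(\Phi^\T)$, so by Corollary~\ref{lemma:InitialCondition} the induced reduced-model initial condition has entries $c_{0,k}=h_d\langle\eta_{j-n},\phi_k\rangle+c_{\mathrm{ss},k}=c_{\mathrm{ss},k}$; the reduced trajectory therefore stays at the steady state and $\tilde{w}_{dlj}(t)-\tilde{w}_{\mathrm{ss},dlj}\equiv 0$. Invoking the POD/Galerkin approximation \eqref{eqn:OutputApprox}, $w_{dlj}\approx\tilde{w}_{dlj}$, so the rows and columns of $\Psi_{dl}$ indexed beyond $n$ are approximately zero. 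Second, for $j\le n$, orthonormality~\eqref{eqn:OrthoNormality} gives $\langle\phi_j,\phi_k\rangle=\Delta V\,\phi_j^\T\phi_k=\delta_{jk}$, and hence $D^*_{l,kj}=(-1)^l\sqrt{\Delta V}\,\delta_{kj}$ in \eqref{eqn:GramInitPerturb}. Corollary~\ref{lemma:InitialCondition} thus yields $c_0=(-1)^l\sqrt{\Delta V}\,h_d\,e_j+c_{\mathrm{ss}}=\tilde{h}_d\tilde{D}_l\tilde{e}_j+c_{\mathrm{ss}}$, which is precisely \eqref{eqn:GramROMInit}; therefore $\Psi_{dl,ij}\approx\tilde{\Psi}_{dl,ij}$ for $i,j\le n$.

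Combining these two observations, $\Psi_{dl}$ is approximately block-diagonal with only its upper-left $n\times n$ block nonzero and equal to $\tilde{\Psi}_{dl}$. A direct block multiplication then gives
\begin{align*}
D_l\Psi_{dl}D_l^\T\approx\bigl[(-1)^l\sqrt{\Delta V}\,\Phi,\,H\bigr]\begin{bmatrix}\tilde{\Psi}_{dl}&0\\0&0\end{bmatrix}\begin{bmatrix}(-1)^l\sqrt{\Delta V}\,\Phi^\T\\ H^\T\end{bmatrix}=\Delta V\,\Phi\,\tilde{\Psi}_{dl}\,\Phi^\T,
\end{align*}
the $H$-columns dropping out because they multiply the zero block. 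Substituting into \eqref{eqn:Gram}, noting that $\tilde{D}_l\tilde{\Psi}_{dl}\tilde{D}_l^\T=\tilde{\Psi}_{dl}$ since $\tilde{D}_l=\pm I_n$, and using $\tilde{h}_d^2=\Delta V\,h_d^2$ from \eqref{eqn:GramROMh} to convert the prefactors, one obtains
\begin{align*}
G_o\approx\Delta V\,\Phi\!\left(\sum_{l=1}^{r}\sum_{d=1}^{s}\frac{1}{rs\,h_d^2}\!\int_0^\infty\!\tilde{\Psi}_{dl}\,\diff t\right)\!\Phi^\T=\Delta V\,\Phi\,(\Delta V\,W_o)\,\Phi^\T=\Delta V^2\,\Phi W_o\Phi^\T,
\end{align*}
which is \eqref{eqn:GramApproxSmall}.

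The main obstacle is the step $w_{dli}\approx\tilde{w}_{dli}$: this is exactly the quality of the POD/Galerkin reduction and is the premise on which the reduced-order Gramian construction rests. In particular, the vanishing of the $H$-direction contributions is exact for the Galerkin-projected dynamics but only approximate for the original nonlinear finite-volume model, so how sharply \eqref{eqn:GramApproxSmall} holds in practice is controlled by the energy measure~\eqref{eqn:Energy} associated with the chosen cut-off values $n_x$ and $n_T$.
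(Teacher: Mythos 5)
Your proof is correct and follows essentially the same route as the paper's: computing $D_l^*$ from the block structure and orthonormality, showing the $\ker(\Phi^\T)$ perturbations leave the reduced model at steady state so that only the upper-left $n\times n$ block of the covariance matrix survives, and then carrying out the block multiplication and rescaling $h_d\to\tilde{h}_d$. The only difference is presentational (you establish the vanishing of the $j>n$ contributions before identifying the surviving block, whereas the paper does it afterwards), and your closing remark on where the $w_{dli}\approx\tilde{w}_{dli}$ approximation enters is a fair characterization of the one non-exact step.
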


\begin{proof}
Substituting \eqref{eqn:PerturbDirection} into \eqref{eqn:GramInitPerturb} yields 
\begin{align}\label{eqn:GramPertubCases}
D^*_{l,ij}&=\langle \big[ (-1)^l \sqrt{\Delta V}\Phi,\; H\big] e_j, \; \phi_i \rangle \nonumber\\
&=\begin{cases} (-1)^l \sqrt{\Delta V} \langle \phi_j , \;  \phi_i \rangle   &\text{for } j=1,\ldots, n \\ 0&\text{for } j=n+1,\ldots, M\end{cases},\nonumber \\
&=\begin{cases} (-1)^l  \sqrt{\Delta V} \delta_{i,j}  &\text{for } j=1,\ldots, n \\ 0&\text{for } j=n+1,\ldots, M\end{cases},
\end{align}
for any $i=1,\ldots,n$ and $l=1,2$. The element-wise definition \eqref{eqn:GramPertubCases} yields 
\begin{align}\label{eqn:GramPertubMatrixLarge}
D_l^*=  \sqrt{\Delta V} \big[ (-1)^l I_n, \, 0_{M-n} \big] \in\R^{n\times M},
\end{align}
where $I_n$ is the unity matrix of size $\R^{n\times n}$ and $0_{M-n}$ is a zero matrix of size $\R^{n \times (M-n)}$. 
Solving the reduced order model \eqref{eqn:PODGalModel} for initial state \eqref{eqn:GramInitROMHigh} and $D^*_l $ from \eqref{eqn:GramPertubMatrixLarge} yields $M$ time series $\tilde{w}^*_{dli}(t)$, $i=1,\ldots,M$ and $\tilde{w}^*_{\mathrm{ss},dli}= \lim\limits_{t \to \infty} \tilde{w}^*_{dli}(t)$ that are used to determine 
\begin{align}\label{eqn:GramRomCovLarge}
\tilde{\Psi}^*_{dl,ij}=\big(\tilde{w}^*_{dli}(t)-\tilde{w}^*_{\mathrm{ss},dli}\big)^\T\big(\tilde{w}^*_{dlj}(t)-\tilde{w}^*_{\mathrm{ss},dlj}\big)
\end{align}
$i,j=1,\ldots,M$, according to \eqref{eqn:GramCov}. Since $\tilde{w}^*_{dlj}(t) \approx w_{dlj}(t)$ according to  \eqref{eqn:OutputApprox}, we have $\tilde{\Psi}^*_{dl}\approx \Psi_{dl}$ and thus 
\begin{align}\label{eqn:GramApproxLarge}
G_o\approx\tilde{G}_0=\sum_{l=1}^r \sum_{d=1}^s\frac{1}{rsh_d^2}\int_0^\infty   D_l \tilde{\Psi}_{dl}^* D_l^\T \diff t,
\end{align}
with $D_l$ according to \eqref{eqn:PerturbDirection}.

We now show the first $i,j=1,\ldots, n$ entries of \eqref{eqn:GramRomCovLarge} suffice to determine \eqref{eqn:GramApproxLarge}. 
For $i=n+1,\ldots, M$, we have $D^*_l e_i=0$ according to \eqref{eqn:GramPertubMatrixLarge}. 
We obtain $c_0=c_\mathrm{ss}$ for the initial state \eqref{eqn:GramInitROMHigh} in this case. In other words, the initial state is a steady state of the system. Solving the reduced order model \eqref{eqn:PODGalModel} for $c_0=c_\mathrm{ss}$ results in $c(t)=c_\mathrm{ss}=\mathrm{const}$ for all $t\geq 0$. Thus, the system output yields $\tilde{w}^*_{dli}(t) = \tilde{w}^*_{\mathrm{ss},dli}=\mathrm{const}$ for all $t\geq 0$. Consequently, $\tilde{\Psi}^*_{dl,ij}=0$ in \eqref{eqn:GramRomCovLarge} for $i,j=n+1,\ldots, M$. We can therefore neglect all time series $\tilde{w}^*_{dli}(t) $ for $i>n$ since they do not contribute to $\tilde{G}_o$. In other words, $\tilde{G}_o$ can be determined from only the first $i,j=1,\ldots, n$ elements of $\tilde{\Psi}^*_{dl}$. 
Let $\tilde{\Psi}_{dl} \in\R^{n\times n}$ consist of these elements, i.e., $\tilde{\Psi}_{dl,ij}=\tilde{\Psi}^*_{dl,ij}$ for $i,j=1,\ldots, n$. Substituting \eqref{eqn:PerturbDirection} in \eqref{eqn:GramApproxLarge} results in
\begin{align*}
\tilde{G}_0&=\sum_{l=1}^r \sum_{d=1}^s\frac{1}{rsh_d^2}\int_0^\infty  \Big( (-1)^l \sqrt{\Delta V}\Phi \Big) \tilde{\Psi}_{dl} \Big( (-1)^l \sqrt{\Delta V}\Phi \Big)^\T \diff t,\\
&=\Delta V\Phi \sum_{l=1}^r \sum_{d=1}^s\frac{1}{rsh_d^2}\int_0^\infty  \Big((-1)^l I_n \Big) \tilde{\Psi}_{dl} \Big((-1)^l{I_n}\Big)^\T \diff t\; \Phi^\T.
\end{align*}
Substituting $\tilde{D}_l$ and $h_d$ from \eqref{eqn:GramROMh} yields
\begin{align}
\tilde{G}_0&=\Delta V^2 \Phi \sum_{l=1}^r \sum_{d=1}^s\frac{1}{rs \tilde{h}_d^2}\int_0^\infty  \tilde{D}_l \tilde{\Psi}_{dl} {\tilde{D}_l}^\T \diff t\; \Phi^\T\nonumber\\
&=\Delta V^2\Phi W_o \Phi^\T, 
\end{align}
which is the claim. 
\end{proof}

We showed that \eqref{eqn:GramApproxSmall} approximates \eqref{eqn:Gram}. Then 
\begin{align}\label{eqn:EigProblemLargeApprox}
\tilde{G}_o \tilde{v}_k - \tilde{\beta}_k \tilde{v}_k = 0
\end{align}
approximates eigenvalue problem \eqref{eqn:EigProblemLarge}, i.e., 
\begin{align}\label{eqn:LinApproxEig}
\beta_k \approx \tilde{\beta}_k \;\text{ and }\;
v_k \approx \tilde{v}_k. 
\end{align}
The eigenvalue problem for $G_o\in\R^{M\times M}$ can be replaced by the much smaller one for $W_o\in\R^{n\times n}$. 
This is stated concisely in the following corollary. 
\begin{corollary}\label{Cor:SmallEigProblem}The non-zero eigenvalues $\tilde{\beta}_k$ in \eqref{eqn:EigProblemLargeApprox} are equal to those of the $n$-dimensional eigenvalue problem
\begin{align}\label{eqn:EigProblemSmall}
  \Delta V W_o \nu_k -  \tilde{\beta}_k \nu_k = 0
\end{align}
and the respective eigenvectors $\tilde{v}_k $ in \eqref{eqn:EigProblemLargeApprox} are given by
\begin{align}\label{eqn:EigVectorLargeFromSmall}
\tilde{v}_k =\Phi \nu_k.
\end{align}
\end{corollary}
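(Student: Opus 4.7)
The plan is to exploit two structural facts: that $\tilde{G}_o$ has the factored form $\Delta V^2 \Phi W_o \Phi^\T$ (from Proposition~\ref{Prp:GramROM}) and that the columns of $\Phi$ are orthonormal in the inner product \eqref{eqn:InnerProductL2Vector}, which means $\Phi^\T \Phi = \Delta V^{-1} I_n$ by \eqref{eqn:OrthoNormality}. These two facts let us convert the $M$-dimensional eigenvalue problem \eqref{eqn:EigProblemLargeApprox} into the $n$-dimensional one \eqref{eqn:EigProblemSmall} by a change of basis onto the POD modes.

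First I would verify the direction: given an eigenpair $(\tilde{\beta}_k, \nu_k)$ of \eqref{eqn:EigProblemSmall}, set $\tilde{v}_k = \Phi \nu_k$ and substitute into $\tilde{G}_o \tilde{v}_k$. Using the factorization $\tilde{G}_o = \Delta V^2 \Phi W_o \Phi^\T$ together with $\Phi^\T \Phi = \Delta V^{-1} I_n$ gives $\tilde{G}_o \tilde{v}_k = \Delta V \Phi W_o \nu_k = \Phi \tilde{\beta}_k \nu_k = \tilde{\beta}_k \tilde{v}_k$, so $(\tilde{\beta}_k, \Phi \nu_k)$ solves \eqref{eqn:EigProblemLargeApprox}.

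For the converse direction, let $(\tilde{\beta}_k, \tilde{v}_k)$ solve \eqref{eqn:EigProblemLargeApprox} with $\tilde{\beta}_k \neq 0$. Since $\tilde{G}_o = \Delta V^2 \Phi W_o \Phi^\T$, every vector in the range of $\tilde{G}_o$ lies in $\mathrm{im}(\Phi)$, and therefore $\tilde{v}_k = \tilde{\beta}_k^{-1}\tilde{G}_o \tilde{v}_k$ can be written as $\Phi \nu_k$ for some $\nu_k \in \R^n$. Plugging this form into $\tilde{G}_o \tilde{v}_k = \tilde{\beta}_k \tilde{v}_k$ and multiplying on the left by $\Phi^\T$ collapses $\Phi^\T \Phi$ to $\Delta V^{-1} I_n$ on both sides and yields $\Delta V W_o \nu_k = \tilde{\beta}_k \nu_k$, as required. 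Uniqueness of $\nu_k$ follows because $\Phi$ has full column rank.

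The only subtlety, and the step I would take most care with, is the restriction to \emph{non-zero} eigenvalues: $\tilde{G}_o$ has rank at most $n$, so it has at least $M-n$ zero eigenvalues whose eigenvectors can be chosen in $\mathrm{ker}(\Phi^\T)$ and do not correspond to any eigenvector of the small problem. Addressing this case requires exactly the range argument above — that non-zero eigenvalues force $\tilde{v}_k$ into $\mathrm{im}(\Phi)$ — which, together with the explicit bijection $\nu_k \mapsto \Phi \nu_k$ between non-zero eigenspaces, establishes both the equality of the non-zero spectra and the eigenvector formula \eqref{eqn:EigVectorLargeFromSmall}.
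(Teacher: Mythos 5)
Your proof is correct, but it takes a genuinely different route from the paper's. The paper establishes the eigenvalue claim by applying Sylvester's determinant identity to the characteristic polynomial, writing $\det\big(\tilde{\beta}_k I_M - \Delta V^2\Phi W_o\Phi^\T\big) = \tilde{\beta}_k^{M-n}\det\big(\tilde{\beta}_k I_n - \Delta V^2 W_o\Phi^\T\Phi\big)$ and then using $\Phi^\T\Phi = \Delta V^{-1}I_n$ to reduce the second factor to the characteristic polynomial of $\Delta V W_o$; the eigenvector formula is then checked only in the direction $\nu_k\mapsto\Phi\nu_k$, by substituting into \eqref{eqn:EigProblemSmall} and left-multiplying with $\Phi$. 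You instead build an explicit bijection between the non-zero eigenspaces: your forward direction coincides with the paper's verification, but you add the converse range argument that $\tilde{\beta}_k\neq 0$ forces $\tilde{v}_k = \tilde{\beta}_k^{-1}\tilde{G}_o\tilde{v}_k\in\mathrm{im}(\Phi)$, so that $\tilde{v}_k=\Phi\nu_k$ with $\nu_k$ unique and non-zero by the full column rank of $\Phi$. What the determinant route buys is the full factorization of the characteristic polynomial, hence equality of algebraic multiplicities and the count of $M-n$ additional zero eigenvalues for free. What your route buys is a more elementary argument (no determinant identity needed) and a strictly stronger eigenvector statement: every eigenvector of \eqref{eqn:EigProblemLargeApprox} with non-zero eigenvalue arises as $\Phi\nu_k$, not merely that such vectors are eigenvectors. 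Since both $\tilde{G}_o$ and $\Delta V W_o$ are symmetric, your bijection between eigenspaces also yields equality of the non-zero spectra with multiplicity, so nothing is lost relative to the paper's argument.
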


\begin{proof} Substituting \eqref{eqn:GramApproxSmall} into \eqref{eqn:EigProblemLargeApprox} yields 
\begin{align}\label{eqn:CharPolyEigProb}
&\det\big( \tilde{\beta}_k I_M - \Delta V^2 \Phi W_o\Phi^T \big) \nonumber\\&= \tilde{\beta}_k^{M-n} \det\big( \tilde{\beta}_k I_n - \Delta V^2 W_o \Phi^T\Phi\big),
\end{align}
with Sylvester's determinant identity, where $I_M$ and $I_n$ are the $\R^{M\times M}$ and $\R^{n\times n}$ identity matrices, respectively. The right hand side of \eqref{eqn:CharPolyEigProb} can further be simplified to 
\begin{align}\label{eqn:CharPolyEigProbSimple}
\tilde{\beta}_k^{M-n} \det\big( \tilde{\beta}_k I_n - \Delta V W_o\big),
\end{align}
with $\Phi^T \Phi= \text{diag}(\nicefrac{1}{\Delta V}, \dots, \nicefrac{1}{\Delta V})$, which follows from~\eqref{eqn:OrthoNormality}. 
The non-trivial roots $\tilde{\beta}_k$ of \eqref{eqn:CharPolyEigProbSimple}, i.e., the eigenvalues of~\eqref{eqn:EigProblemSmall}, correspond to the non-zero roots of the left hand side of \eqref{eqn:CharPolyEigProb}, i.e., the non-zero eigenvalues of \eqref{eqn:EigProblemLargeApprox}, which establishes the claim for the eigenvalues. 
Substituting $\Phi^T \Phi= \text{diag}(\nicefrac{1}{\Delta V}, \dots, \nicefrac{1}{\Delta V})$ into~\eqref{eqn:EigProblemSmall} and left-multiplying with $\Phi$ yields
%We now consider the eigenvectors. We showed in \eqref{eqn:CharPolyEigProb} that \eqref{eqn:EigProblemSmall} results from  
%\begin{align}\label{eqn:EigProblemSmallLarge}
%  \Delta V^2 W_o  \Phi^\T  \Phi \nu_k  -  \tilde{\beta}_k \nu_k = 0.
%\end{align}
%Left multiplying \eqref{eqn:EigProblemSmallLarge} with $\Phi$ yields
\begin{align}\label{eqn:EigProblemSmallLarge2}
  \Delta V^2 \Phi W_o  \Phi^\T  \Phi \nu_k  -  \tilde{\beta}_k \Phi \nu_k = 0.
\end{align}
The eigenvalue problem \eqref{eqn:EigProblemLargeApprox} results from substituting $\Phi \nu_k$ in \eqref{eqn:EigProblemSmallLarge2} by \eqref{eqn:EigVectorLargeFromSmall}, which proves the claim on the eigenvectors. 
\end{proof}

\section{Application to the drying process of wood chips}\label{sec:ApplicationToWoodChip}

We apply the POD-Galerkin based model reduction procedure presented in Section~\ref{subsec:modelreduction} to the wood chip drying process introduced in Section~\ref{sec:WoodChipModelling}. We briefly evaluate the reduced model in Section~\ref{subsec:ROMevaluation} as necessary for the observability analysis in Section~\ref{subsec:OBSVDryingProcess}. 
We also analyze the influence of the degree of reduction on the observability analysis.

\subsection{Reduced order model fitness}\label{subsec:ROMevaluation}

%\begin{table}[b]
%\begin{center}
%\caption{Simulation conditions for the drying process of  wood chips}\label{tb:CFDsimulations}
%\begin{tabular}{llr}
%initial wood chip moisture & $x(t=0)$ & \unit[0.8]{kg/kg}\\ 
%initial wood chip temperature & $T(t=0)$ & \unit[298.15]{K}\\
%simulation duration & &  \unit[1100]{s}\\
%number of grid points & $N$ &  \unit[1000]{}\\
%number of snapshots & $m$ &  \unit[100]{}\\
%ambient temperature & $T_\infty(t<0)$ & \unit[298.15]{K}\\
% & $T_\infty(t\geq0)$ & \unit[373.15]{K}
%\end{tabular}
%\end{center}
%\end{table}

The reduced order model is based on finite volume simulation results for a drying process under typical conditions. An initially wet wood particle with $x_0=0.8$ at room temperature $T_0=\unit[298.15]{K}$ is exposed to hot dry air until a steady state is reached after about $\unit[1100]{s}$. The finite volume model \eqref{eqn:NonlinSysAllg} yields the temperature $T(y_i,t_j)$ and moisture $x(y_i,t_j)$ on a temporal and spatial grid $i=1,\ldots,N$, $j=0,\ldots,m-1$ for $N=1000$ grid points and $m=100$ time steps.
Applying POD to these simulation results yields modes $\varphi_{x,l}(y_i)$, $\varphi_{T,k}(y_i)$ and coefficients $c_{x,l}(t)$, $ c_{T,k}(t)$, $l=1,\ldots,n_x$, $k=1,\ldots,n_T$ such that \eqref{eqn:SumLinApprox} and \eqref{eqn:SumLinApproxWater} hold. We choose cut-off values of $n_x=n_T=5$, thus $n=10$, to ensure \eqref{eqn:Energy} satisfies $E(n_x), \, E(n_T)>0.9999$. Figure \ref{fig:PODCoeff} shows the first $k=1,\ldots,5$ coefficients $c_{x,l}(t_j)$ and $c_{T,k}(t_j)$ for moisture and temperature, respectively. 

\begin{figure}[h!]
	\centering
	\subfigure{\includegraphics[width=0.35\textwidth]{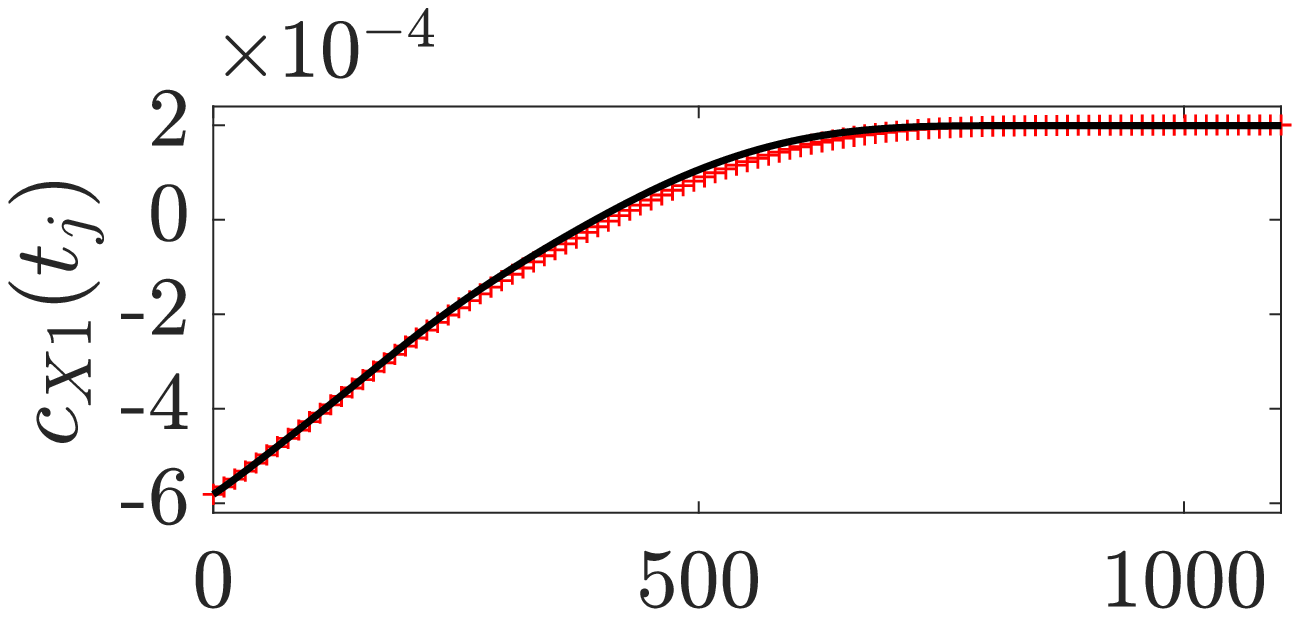}}
	\subfigure{\includegraphics[width=0.35\textwidth]{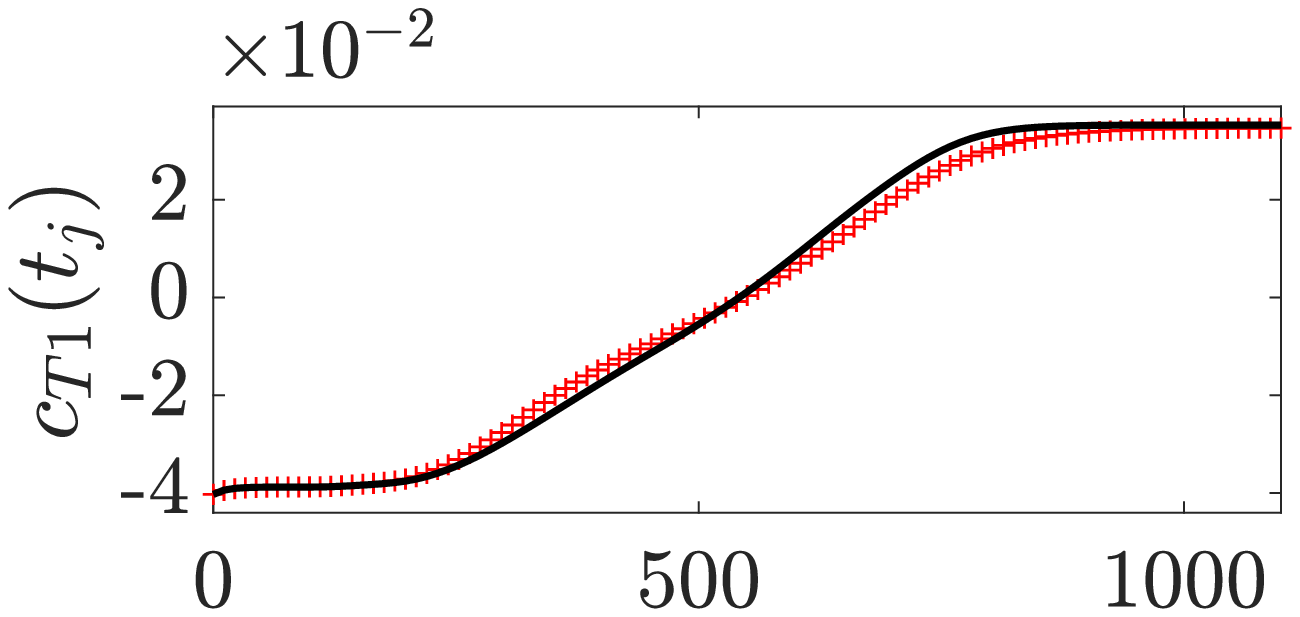}}
	\subfigure{\includegraphics[width=0.35\textwidth]{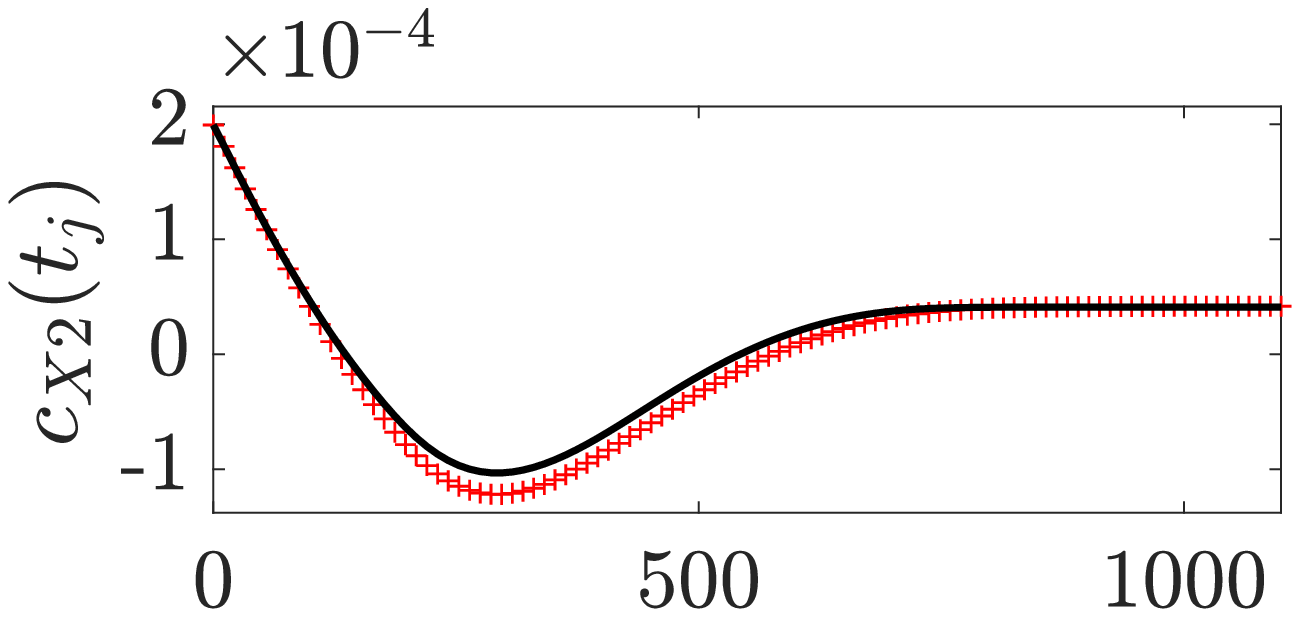}}
	\subfigure{\includegraphics[width=0.35\textwidth]{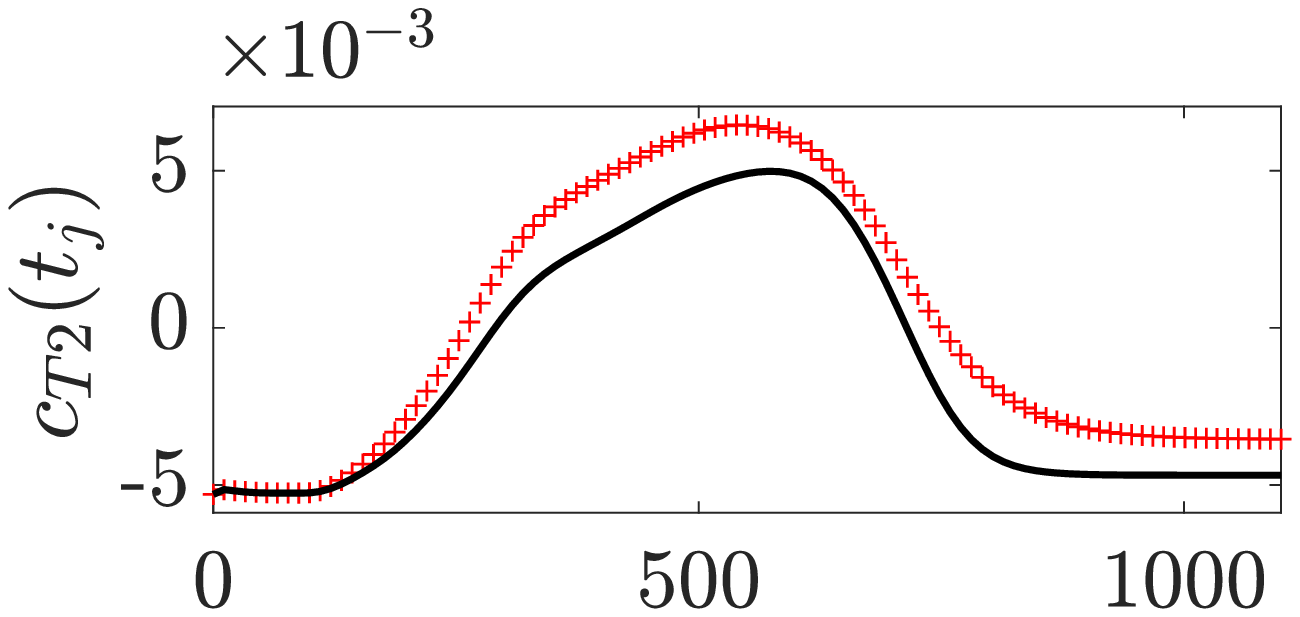}}
	\subfigure{\includegraphics[width=0.35\textwidth]{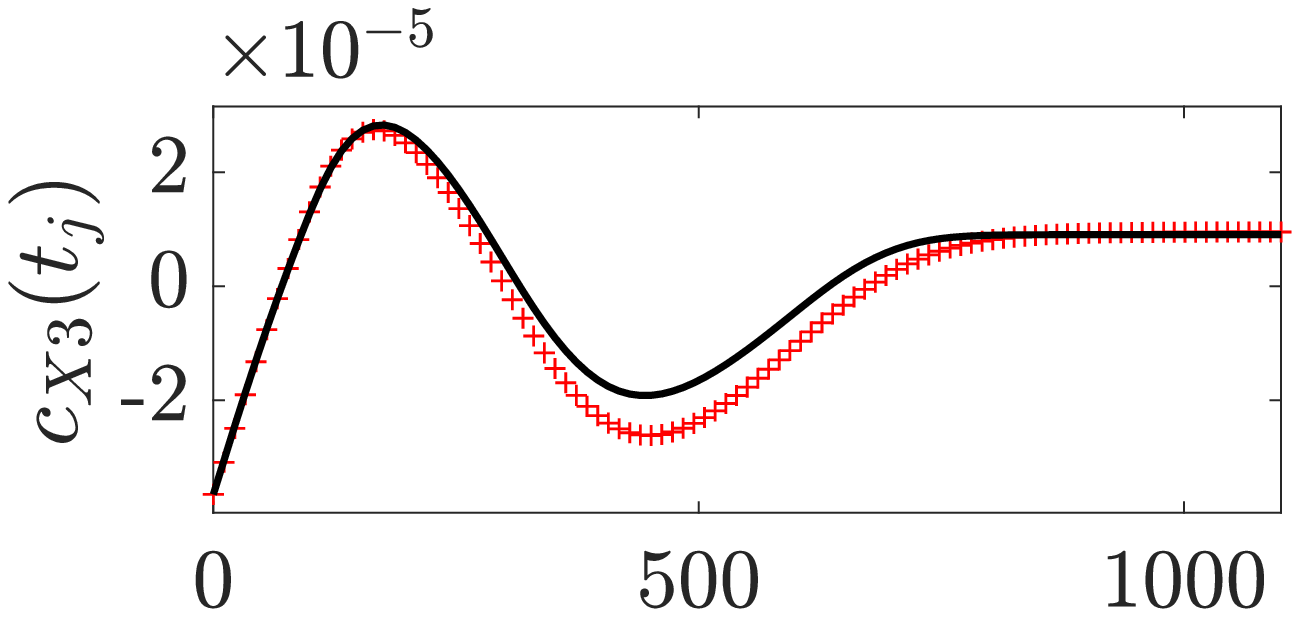}}
	\subfigure{\includegraphics[width=0.35\textwidth]{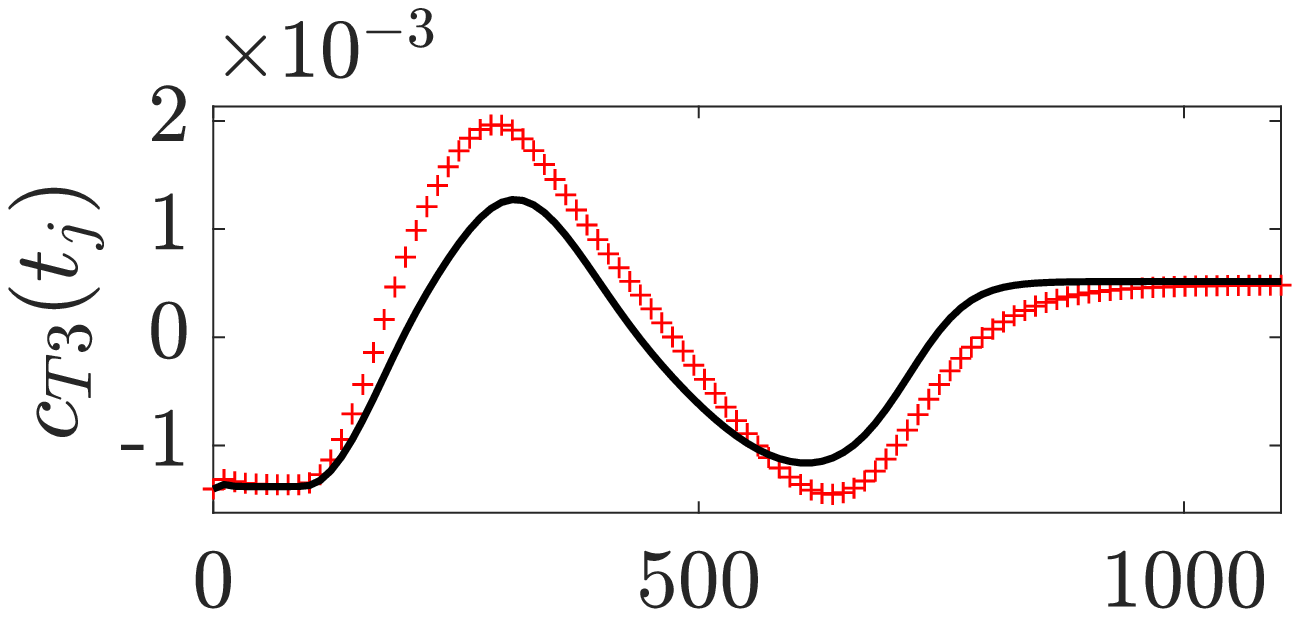}}
	\subfigure{\includegraphics[width=0.35\textwidth]{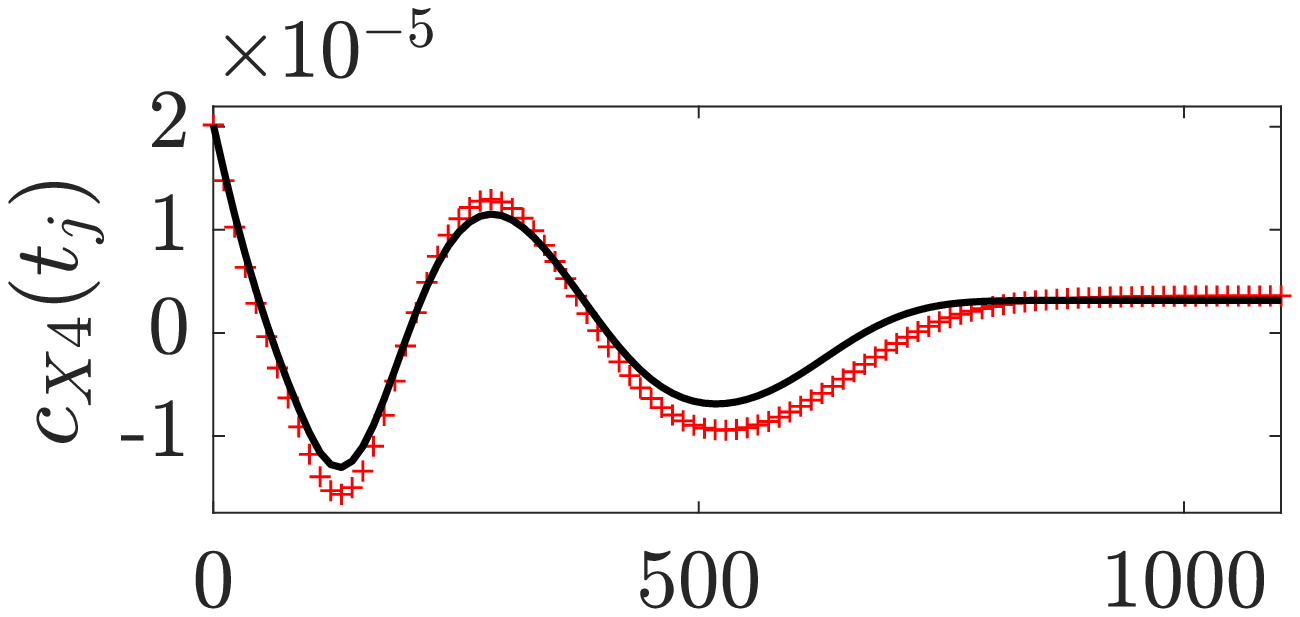}}
	\subfigure{\includegraphics[width=0.35\textwidth]{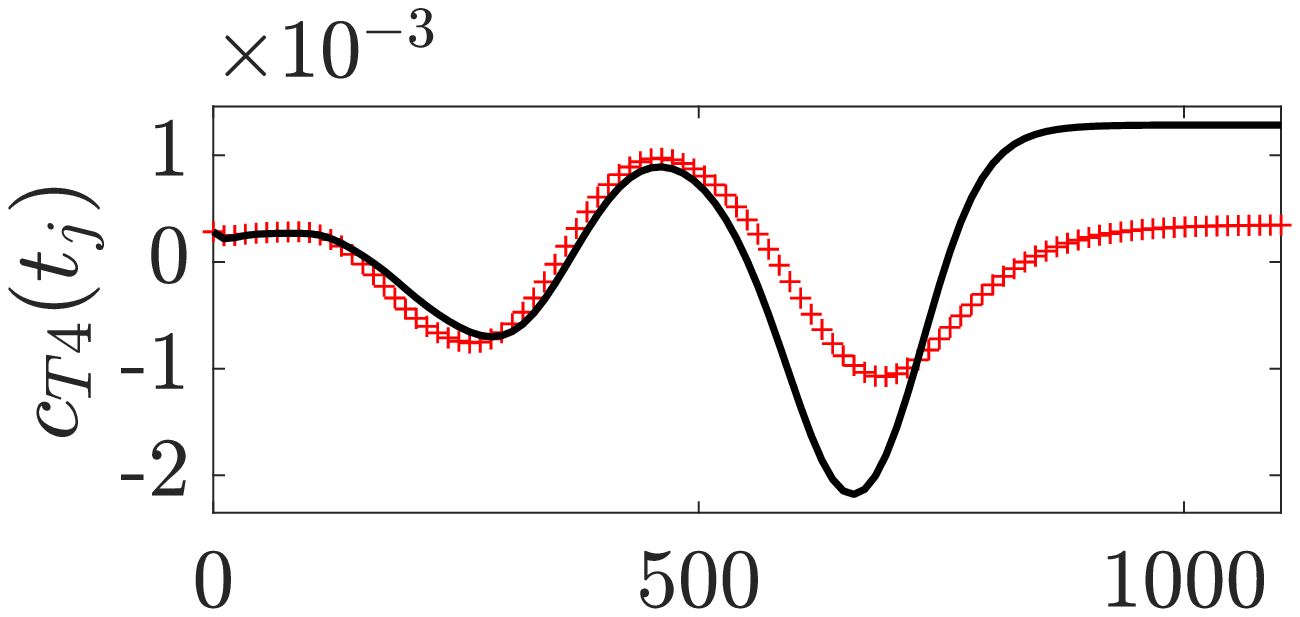}}
	\subfigure{\includegraphics[width=0.35\textwidth]{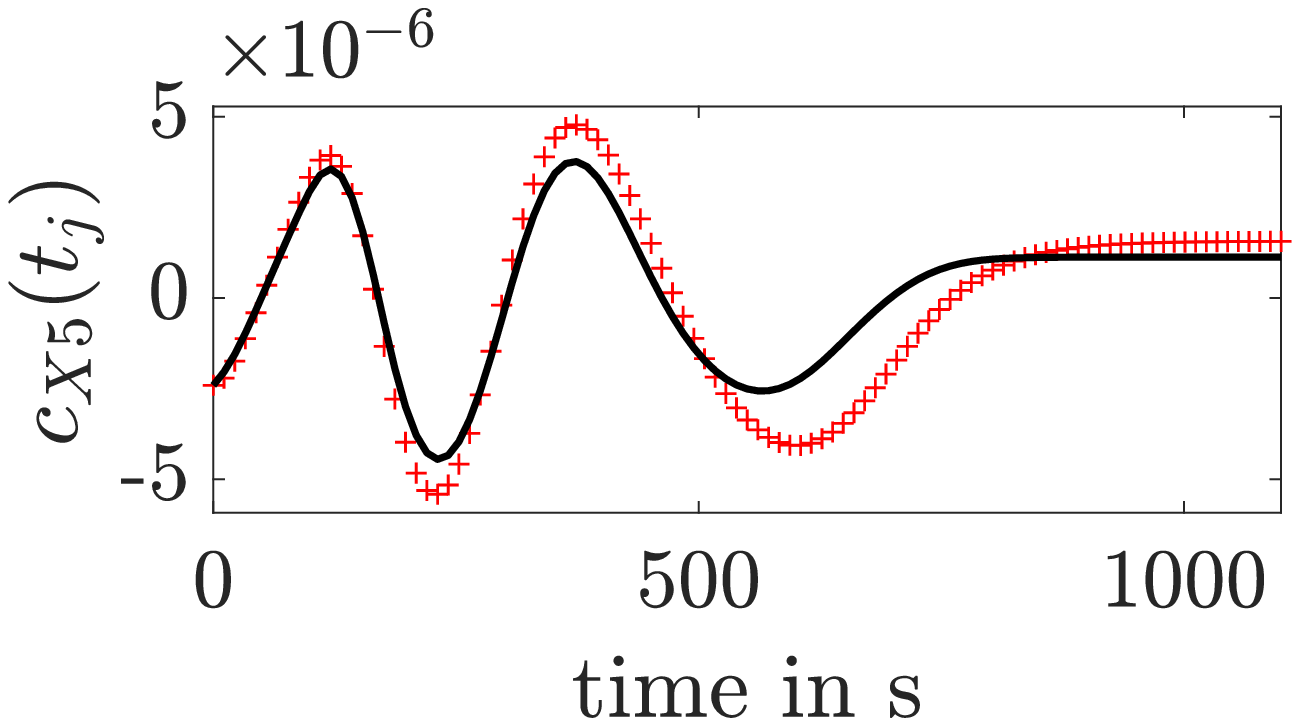}}
	\subfigure{\includegraphics[width=0.35\textwidth]{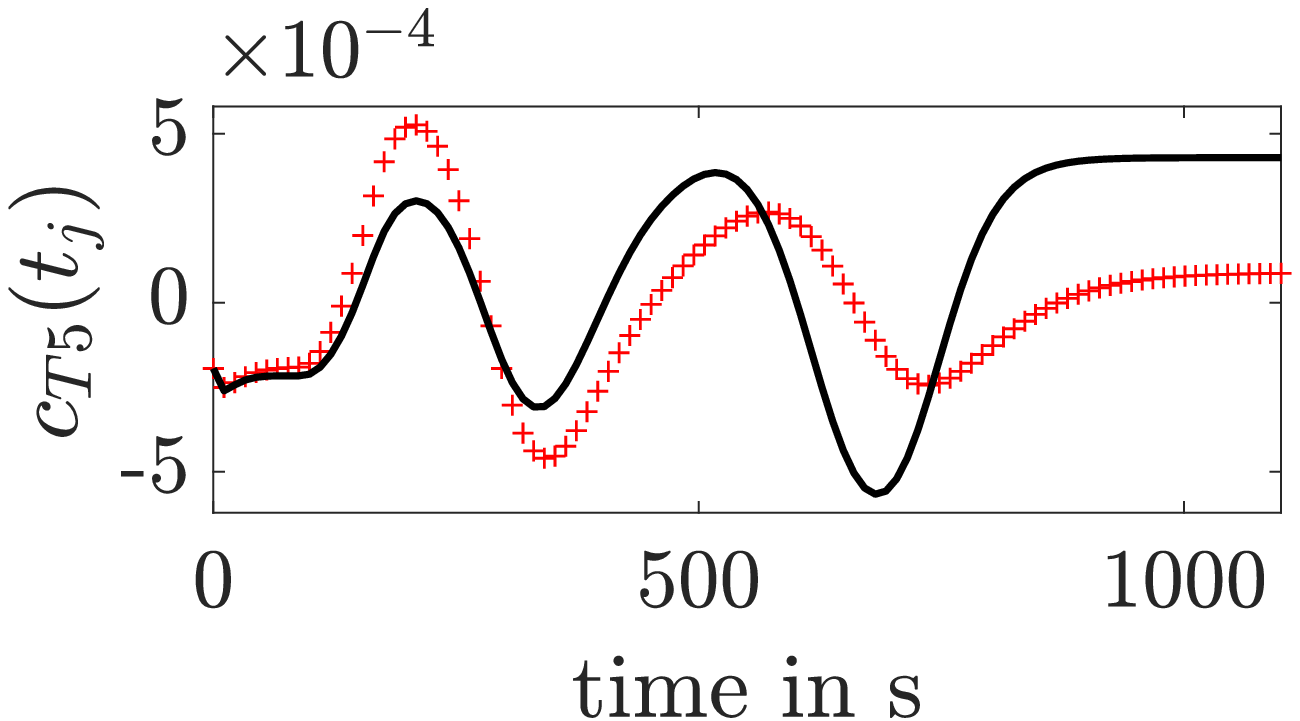}}
		\caption{Coefficients from approximation \eqref{eqn:SumLinApproxWater} (left) and \eqref{eqn:SumLinApprox} (right). Time-continuous results of a reduced order model with $n=10$ (black) are compared to those of the original finite volume simulation (red).}
	\label{fig:PODCoeff}
\end{figure}

We first check the approximation quality of the reduced order model. Figure~\ref{fig:PODCoeff} compares the time-dependent coefficients obtained by the POD to the 
time series for $c_{x,i}(t)$ and $c_{T,j}(t)$ defined in~\eqref{eqn:PODGalStates} that result from solving the reduced order model \eqref{eqn:PODGalModel}. 
We observe that the most important modes match acceptably well while some deviations must be accepted for higher-order modes.

Substituting $c_{x,i}(t)$ and $c_{T,j}(t)$ from~\eqref{eqn:PODGalStates} into \eqref{eqn:SumLinApprox} and \eqref{eqn:SumLinApproxWater}
yields the desired  moisture distribution $x(y, t)$ and temperature distribution $T(y, t)$. We determine the normalized root mean square error
\begin{align}\label{eqn:MeanRelativeError}
\varepsilon_T &= \frac{\sqrt{\frac{1}{mN} \sum_{i=1}^{N}\sum_{j=1}^{m} \big(  \epsilon_T(y_i,t_j) \big)^2 } }{\max\limits_{i,j}(T(y_i,t_j))-\min\limits_{i,j}(T(y_i,t_j))},
\end{align}
over all locations $y_i$, $i= 1, \dots, N$ and times $t_j$, $j= 1, \dots, m$, where 
\begin{align}\label{eqn:AbsError}
\epsilon_T(y_i,t_j) &= T(y_i,t_j) - \big( \bar{T}(y_i) + \textstyle\sum_{k=1}^{n_T} \varphi_{T,k}(y_i) c_{T,k}(t_j) \big).
\end{align}
The error amounts to $\varepsilon_T=3.6\%$.
%\begin{align*}
%\epsilon_T(y_i,t_j) &= \frac{\big\vert \big(\bar{T}(y_i)  + \sum_{k=1}^{n_T} \varphi_{T,k}(y_i) c_{T,k}(t_j) \big)-T(y_i,t_j)\big\vert}{\max\limits_{i,j}(T(y_i,t_j))-\min\limits_{i,j}(T(y_i,t_j))}.
%\end{align*}
%The mean relative error
%\begin{align}\label{eqn:MeanRelativeError}
%\varepsilon_T &= \tfrac{1}{mN}\textstyle\sum_{i=1}^{N}\sum_{j=1}^{m}\epsilon_T(y_i,t_j)
%\end{align}
%over all locations $y_i$, $i=1,\ldots,N$ and times $t_j$, $j=1,\ldots,m$ is $\varepsilon_T = 2.6\%$.
%The mean relative error for the moisture $\varepsilon_x$ is determined accordingly and yields $\varepsilon_x=1.1\%$. 
The normalized root mean square error for the moisture $\varepsilon_x$ is determined accordingly and amounts to $\varepsilon_x=1.9\%$. 

\begin{figure}[t]
	\centering
	\subfigure{\includegraphics[width=0.7\textwidth]{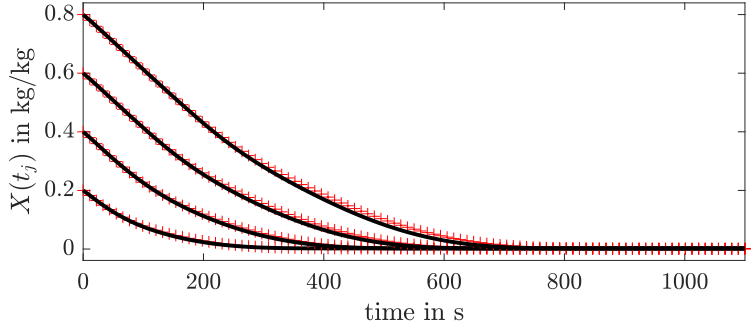}}
	\caption{The total moisture $X(t)$ determined with the reduced order model (solid black) is compared to the total moisture that was obtained with finite volume methods (red) for different initial conditions $x_0\in\{0.8,\;0.6,\;0.4,\;0.2\}$.}
	\label{fig:EKFEstimatedOutput}
\end{figure}

The overall moisture \eqref{eqn:TotalMoisture}, specifically its discrete variant 
\begin{align}\label{eqn:TotalMoistureDiscrete}
X(t) \approx \frac{1}{V}\sum_{i=1}^N x(y_i,t_j) \Delta V = \tilde{X}(t_j),
\end{align}
obtained with the reduced order model is compared to the total moisture obtained with the original model for different initial moistures $x_0\in\{0.8,\;0.6,\;0.4,\;0.2\}$ in Figure~\ref{fig:EKFEstimatedOutput}. We analyze different initial moistures since a variation of initial conditions is required in the observability analysis. Note that the reduced order model was derived from finite volume simulation results for $x_0=0.8$ only; the finite volume simulation results for $x_0\in\{0.6,\;0.4,\;0.2\}$ are shown for validation purpose only.
 We observe a good agreement again. %The mean relative error for the total moisture
%\begin{align}
%\varepsilon_X &= \tfrac{1}{m}\textstyle\sum_{j=1}^{m}\epsilon_X(t_j),
%\end{align}
%where
%\begin{align*}
%\epsilon_X(t_j) &= \tfrac{\big\vert \sum_{i=1}^N \big(\bar{x}(y_i)  + \sum_{k=1}^{n_x} \varphi_{x,k}(y_i) c_{x,k}(t_j) \big) \Delta V -X(t_j)\big\vert}{\max\limits_{j}(X(t_j))-\min\limits_{j}(X(t_j))},
%\end{align*}
%yields $\varepsilon_X=0.7\%$. 
The normalized root mean square error for the total moisture
\begin{align*}
\varepsilon_X &= \frac{\sqrt{\frac{1}{m} \sum_{j=1}^{m} \big(  X(t_j) -\tilde{X}(t_j) \big)^2 } }{\max\limits_{j}(X(t_j))-\min\limits_{j}(X(t_j))},
\end{align*}
with $\tilde{X}(t_j)$ according to \eqref{eqn:TotalMoistureDiscrete}, is not greater than $\varepsilon_X=1.1\%$ for all initial conditions. 
We conclude the errors are acceptable and the reduced order model sufficiently accurately represents the inner particle temperature and moisture distribution during the drying process.

We stress again the simplifications for the explanations in Section~\ref{subsec:modelreduction} do not apply in the present Section~\ref{sec:ApplicationToWoodChip}. In particular, the material parameters $s$, $\lambda$ and $\delta$ are nonlinear functions of the local moisture or temperature approximations and $\lambda$ and $\delta$ are tensors ($\R^{3\times 3}$) that model the wood anisotropy.

\subsection{Choice of measurement position}\label{subsec:OptimalMeasurement}

%It is ultimately our aim to determine the inner particle moisture distribution from measurements of only the surface temperature. Since the whole particle surface cannot be seen by a camera and may be partly covered by other particles, we are interested in the location at which temperature measurements result in good observability. 
%We determine an observability map to visualize the degree of observability for every location on the particle surface. 
It is ultimately our aim to determine the inner particle moisture distribution from measurements of only the surface temperature, where these measurements may even be available at a few points on the surface only. 
We investigate the limiting case of the observability from a temperature measurement at a single, isolated point on the surface in the present subsection. 
The subsequent subsection addresses the observability from measurements on a partial face of a particle, which models a particle that is partly covered by its neighbors.

%Since the whole particle surface cannot be seen by a camera and may be partly covered by other particles, we are interested in the location at which temperature measurements result in good observability. 
%We determine an observability map to visualize the degree of observability for every location on the particle surface. 

 \begin{figure}[b]
	\centering
	\subfigure{\includegraphics[width=0.35\textwidth]{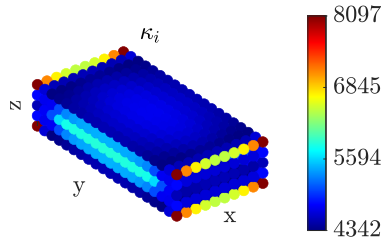}}
	\caption{Observability measure $\kappa_i$ as a function of the measurement position.}
	\label{fig:ObservabilityMeasure}
\end{figure}

Let $\kappa_i$, $i=1,\ldots,N_w$ refer to the observability measure~\eqref{eqn:ObsvMeausure} for the output $w(t)=T(y_i,t)$, where $y_i$ now is a single arbitrary point on the surface of the particle. 
$N_w=568$ such points exist. The eigenvalues required to determine $\kappa_i$ are obtained from the reduced order model (ROM) based observability Gramian $W^*_{\text{o},i}$ according to Corollary~\ref{Cor:SmallEigProblem} for each of these points. 
Specifically, we apply initial conditions~\eqref{eqn:GramROMInit} to the reduced order model \eqref{eqn:PODGalModel} of order $n=10$, where $\tilde{h}_d\in\{ 10^{-7},10^{-6},10^{-5}\}$, i.e., $s= 3$, $\tilde{D}_l = (-1)^l I_n$ with $r=2$ and where $c_\text{ss}$ refers to the steady state that results for an ambient temperature of $T_\infty=\unit[298.15]{K}$. Note that these $h_d$ cover $3$ orders of magnitude. Solving the reduced order model for each of the $s\cdot r\cdot n=60$ initial states $c_0$ results in the desired output time series $\tilde{w}_{dli}(t_j)$ and the steady state output value $\tilde{w}_{\text{ss},dli}$.
Approximating the integral in~\eqref{eqn:GramROM} by a sum with $\Delta t=\unit[0.005]{s}$ and $m_\text{f}=1\times 10^6$ time steps yields
\begin{align}\label{eqn:GramROMDiscrete}
W_{o,i}^*=\sum_{l=1}^r \sum_{d=1}^s\frac{1}{rsh_d^2} \sum_{j=0}^{m_\text{f}}  I_n  \tilde{\Psi}_{dl} I_n^\T \Delta t,
\end{align}
with 
\begin{align*}
\tilde{\Psi}_{dl,ij}=\big(\tilde{w}_{dli}(t_j)-\tilde{w}_{\mathrm{ss},dli}\big)^\T\big(\tilde{w}_{dlj}(t_j)-\tilde{w}_{\mathrm{ss},dlj}\big)
\end{align*}
for each surface point $i= 1, \dots, N_w$. 
The value of $m_\text{f}$ was chosen sufficiently large for $w_{dli}(t_{m_\text{f}})$ to reach its steady state value. 
Note that $W_{o,i}^*\in\R^{10 \times 10}$, since the order of the reduced order model is $n=10$.

Figure~\ref{fig:ObservabilityMeasure} shows the resulting values for $\kappa_i$, $i= 1, \dots, N_w$.
%The observability Gramians $W^*_{\text{o},i}$ are calculated for $i=1,\ldots,N_w$ according to \eqref{eqn:GramROMDiscrete}. The desired $\kappa_i$ is determined according to \eqref{eqn:ObsvMeausure} from eigenvalues according to Corollary~\ref{Cor:SmallEigProblem}. The $\kappa_i$ for all $i=1,\ldots,N_w$ constitute the observability map which is visualized in Figure~\ref{fig:ObservabilityMeasure}. 
It is evident that the best measurement positions occur at some of the edges of the particle. The surfaces orthogonal to the fiber direction of the wood also provide good measurement positions. 
From a practical point of view, the particle edges are not suitable for measurements due to their narrow extension. 
We conclude the surfaces pointing in fiber direction should be used for measurements.

%measures which are mainly based upon the least observable direction in state space, 
%unlikely that every single state of a system needs to be observed
% instead, we should ensure that the most important states can be observed easily, 
%Larger values of the trace correspond to an increase in the overall observability of a system.

%The choice of the measurement domain $\partial\Omega_w$ that determines the output \eqref{eqn:Output} for the observability analysis is obviously not fixed. Other domains are conceivable and may be differently suitable for the use in an observer. It is our aim to determine the best choice with respect to the measure \eqref{eqn:ObsvMeausure}. 

\subsection{Observability of the drying process}\label{subsec:OBSVDryingProcess}

\begin{figure}[t!]
	\centering
	\subfigure{\includegraphics[width=0.35\textwidth]{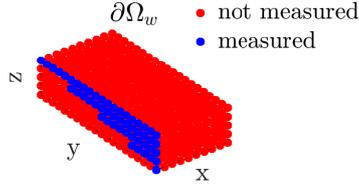}}
	\caption{The domain colored in blue marks the measured surface $\partial\Omega_w$.}
	\label{fig:MeasuredSurface}
\end{figure}

From here on, the temperatures are assumed to be measured on the surfaces orthogonal to the fiber direction. More precisely, we assume the average surface temperature 
\begin{align}\label{eqn:Output}
w(t)&= \tfrac{1}{v} \big(T(y_1,t)+  \hdots +  T(y_v,t)\big) \, \in\R,
\end{align}
to be measured as a function of time for the blue domain in Figure~\ref{fig:MeasuredSurface}, which is composed of $v=51$ surface elements. 
This domain models a particle face that is partly covered by neighboring particles. 
 
We first determine the observability Gramian according to \eqref{eqn:GramROMDiscrete} for output \eqref{eqn:Output} and $h_d$, $D_l$, $m_f$ and $\Delta t$ as in Section~\ref{subsec:OptimalMeasurement}. The observability measure \eqref{eqn:ObsvMeausure} for this case reads $\kappa=4.5 \times 10^3$. 
We conclude the reduced order model of order $n=10$ is suitable to be used in a state observer since the output \eqref{eqn:Output} permits observing the states of the reduced order model thus permits reconstructing the temperature and moisture distribution inside the wood chip. 

We additionally determine the first eigenvector $\tilde{v}_1$ indicating the most observable direction (Figure~\ref{fig:MostControllableModes}). We observe that the temperature is significantly better observable than the moisture. We therefore expect the Kalman filter presented in Section~\ref{sec:EKF} to yield better results for the temperature than the moisture reconstruction.

\begin{figure}[t]
	\centering
	\subfigure{\includegraphics[width=0.35\textwidth]{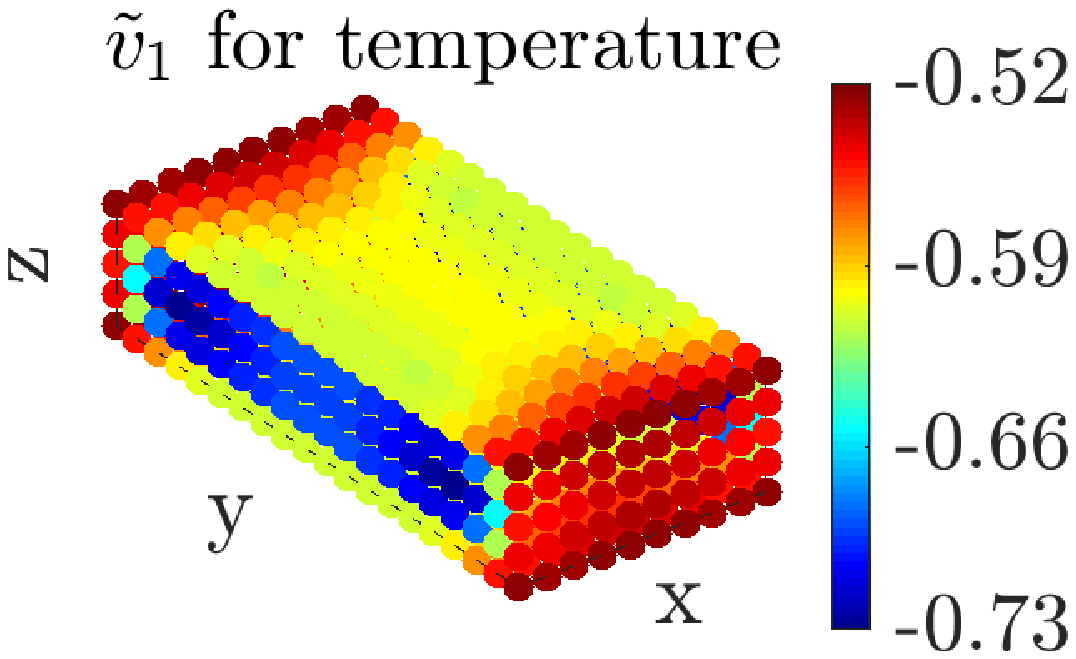}}
	\subfigure{\includegraphics[width=0.35\textwidth]{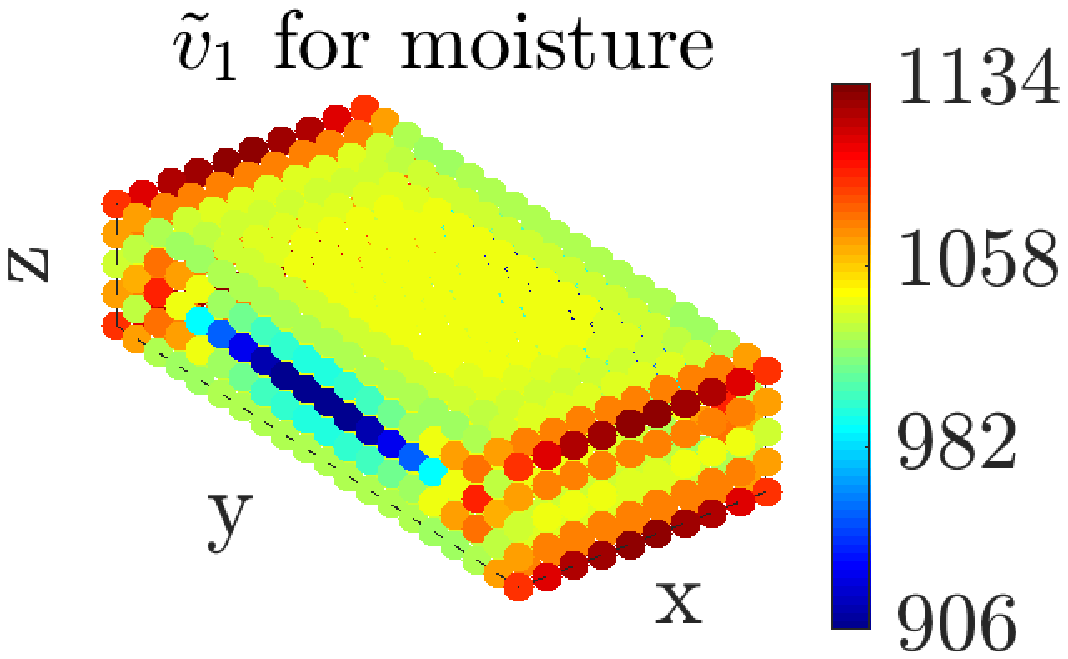}}
	\caption{Approximation of the first eigenvector $\tilde{v}_{1}$ determined with a reduced order model of order $n=10$. The first $1\ldots N $ entries of $\tilde{v}_1$ refer to the temperature (left) and the last $N+1 \ldots 2N$ entries refer to the moisture (right).} 
	\label{fig:MostControllableModes}
\end{figure}

\begin{figure}[b]
	\centering
	%\includegraphics[width=0.4\textwidth]{Figs/ObsvMeasureComparison3.eps}
		%\subfigure{\includegraphics[width=0.23\textwidth]{Figs/ObsvMeasureComparison.eps}}
		%\subfigure{\includegraphics[width=0.23\textwidth]{Figs/ObsvMeasureComparison2.eps}}
		\subfigure{\includegraphics[width=0.7\textwidth]{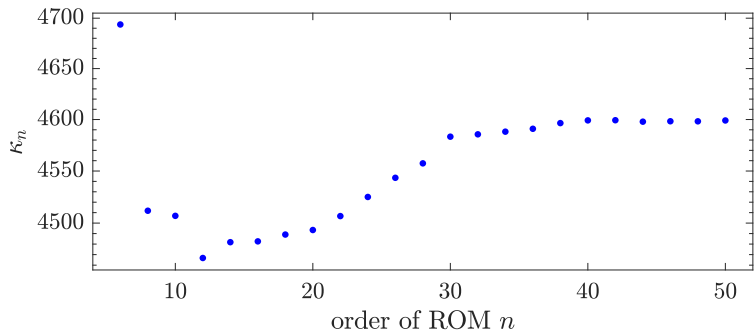}}
		\caption{Observability measure $\kappa_{n}$ for reduced order models of different orders $n=6,8,\ldots,50$ and $n_x= n_T= n/2$. }
	\label{fig:SingularValuesGram}
\end{figure}

Finally, it remains to check how observability varies as a function of the order $n$ of the reduced order models. %statement changes when Gramian \eqref{eqn:GramROMDiscrete} is derived from reduced order model of different order. For this check, 
We repeat the observability analysis for the output~\eqref{eqn:Output} and $n= 6, 8, \dots, 50$, where $n= 6$ is the smallest order that resulted in a stable reduced order model and $n= 50$ was chosen, because the observability properties do not change for larger values (see Fig.~\ref{fig:SingularValuesGram}).  
%reduced order model of different order and compare their observability measure \eqref{eqn:ObsvMeausure}. We determine the observability Gramian that was derived for $n=10$ for reduced order model of order $n=6,\,8,\ldots\,50$. We choose these $n$ with $n_x=n_T$ since the reduced order model of smaller order $n=4$ cannot be integrated stably and $n=50$ is an arbitrary high number. 

The values of $\kappa$ for the output~\eqref{eqn:Output} are shown in Figure~\ref{fig:SingularValuesGram} as a function of $n$. 
The shown values converge to $\kappa= 4599$. Since all values are within $\pm 3\%$ of the converged value, we conclude the observabilities are acceptable for all shown orders. 
%Since all values are within $3\%$ of We conclude $n\geq 6$ is an appropriate order for the reduced order model with respect to observability properties. 
%Let $\kappa_n$ refer to the observability measure of the Gramian $W^*_{\text{o},n}$ derived from a reduced order model of order $n$. All determined $\kappa_n$ are displayed in Figure~\ref{fig:SingularValuesGram}. We observe that $\kappa_6$ is notably larger. The $\kappa_n$ with $n\geq 6$ converge to $\kappa_{50}=4599$ for $n\rightarrow 50$.  
Since $n\ge 10$ was found to be required to obtain the desired approximation accuracy ($E> 99.99\%$, see Section~\ref{subsec:ROMevaluation}), we use $n= 10$ in the remainder of the present work to ensure both sufficient observability and approximation accuracy. 
%However, the accuracy of a reduced order model with $n=6$ is still too poor to accurately represent the drying behavior. Specifically, we obtain $E(n_x=3)=99.90\%$ and $E(n_T=3)=99.96\%$, but chose $E>99.99\%$ in Section~\ref{subsec:ROMevaluation}. Thus, $n=10$ is used for the remainder of the present work.

\subsection{State observer for the drying of wood chips}\label{sec:EKF}

%Based on the reduced order model of Section~\ref{subsec:modelreduction}, we set up an extended Kalman filter (EKF) in Section~\ref{subsec:EKFDesign} and apply it to the drying process in Section~\ref{sec:EKFResults}.

%\subsection{Observer design} \label{subsec:EKFDesign}
We apply the EKF to the reduced order model \eqref{eqn:PODGalModel} in order to estimate its states \eqref{eqn:PODGalStates} from measurements of the output \eqref{eqn:Output}.
%Ultimately, this permits reconstructing the temperature and moisture distribution according to \eqref{eqn:SumLinApprox} and \eqref{eqn:SumLinApproxWater}, respectively, with the reduced order model. 
We refer to Appendix B for details on the EKF. 
%We will not discuss the EKF in detail. The necessary calculations are stated in \ref{appx:B} for completeness. 

Let $\hat{c}(t_j)$ refer to the estimated state of~\eqref{eqn:PODGalModel} at time $t_j$, $j=1,\ldots,m$.  
The temperature and moisture estimate, $\hat{T}(y_i,t_j)$ and $\hat{x}(y_i,t_j)$, respectively,  can then be determined by substituting $c(t)$ by $\hat{c}(t_j)$ in \eqref{eqn:SumLinApprox} and \eqref{eqn:SumLinApproxWater}, respectively.
The initial guess for the state and its covariance are denoted by $\hat{c}_0$ and $P_0$, respectively. 

%\subsection{Observer evaluation}\label{sec:EKFResults}

\begin{figure}[h!]
	\centering
	\subfigure{\includegraphics[width=0.35\textwidth]{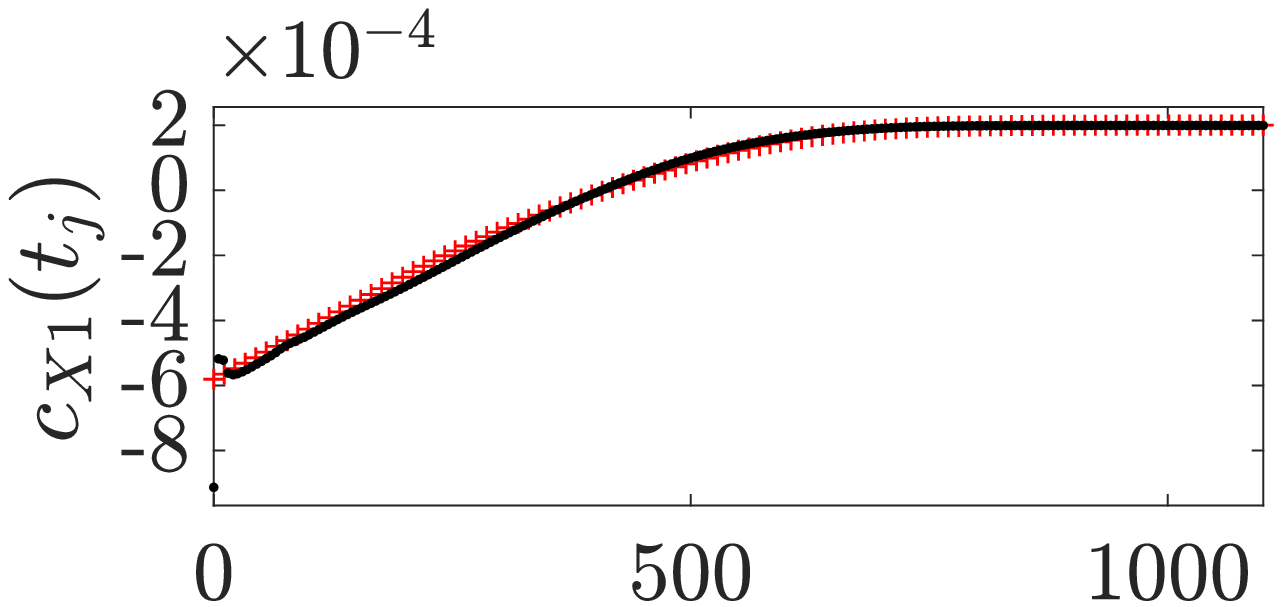}}
	\subfigure{\includegraphics[width=0.35\textwidth]{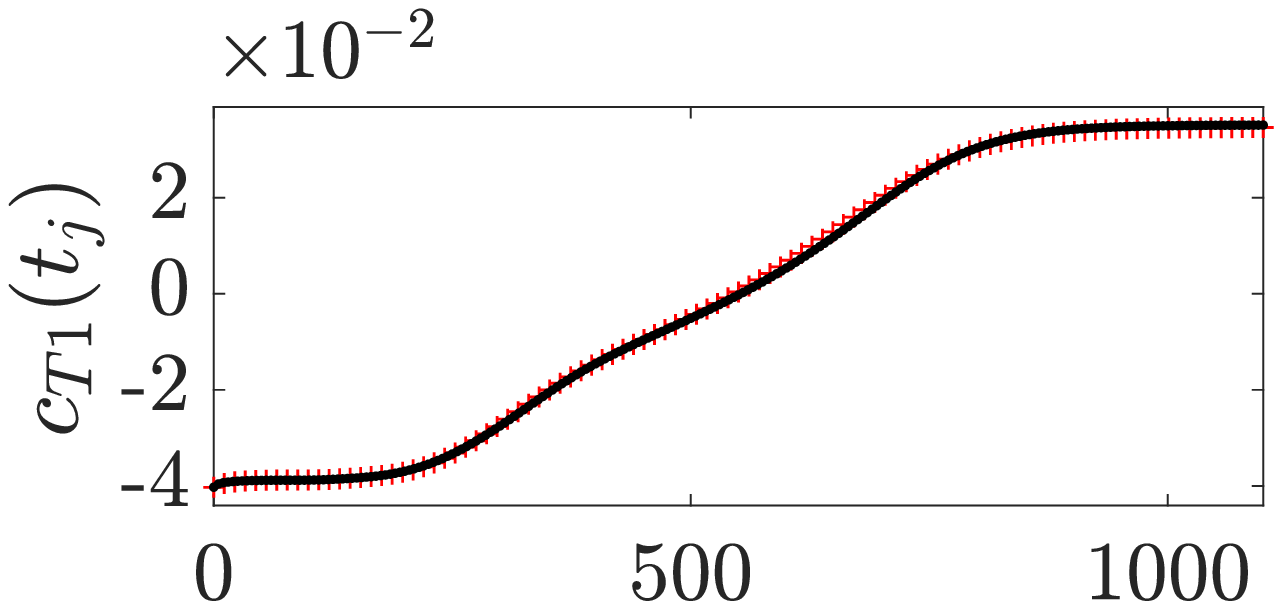}}
	\subfigure{\includegraphics[width=0.35\textwidth]{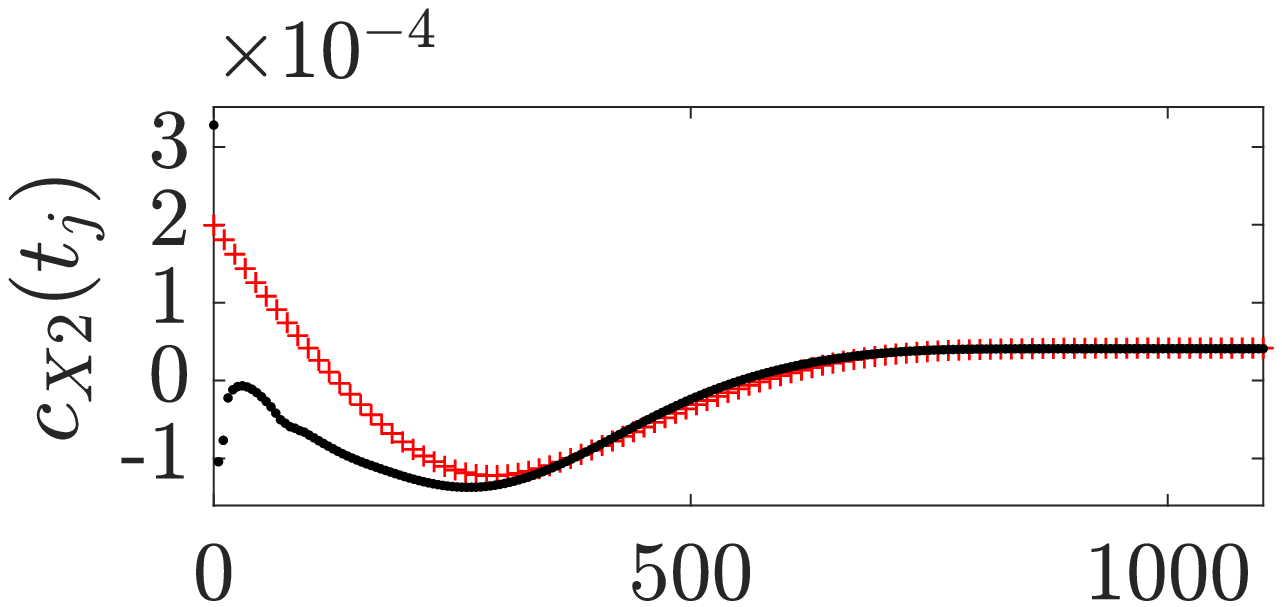}}
	\subfigure{\includegraphics[width=0.35\textwidth]{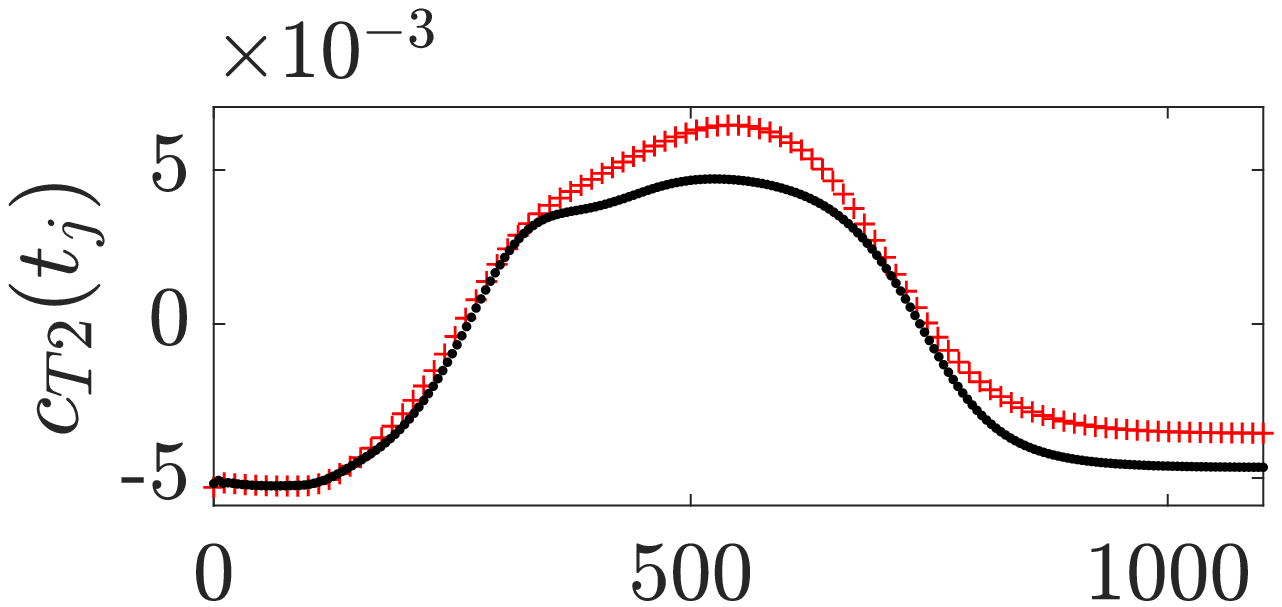}}
	\subfigure{\includegraphics[width=0.35\textwidth]{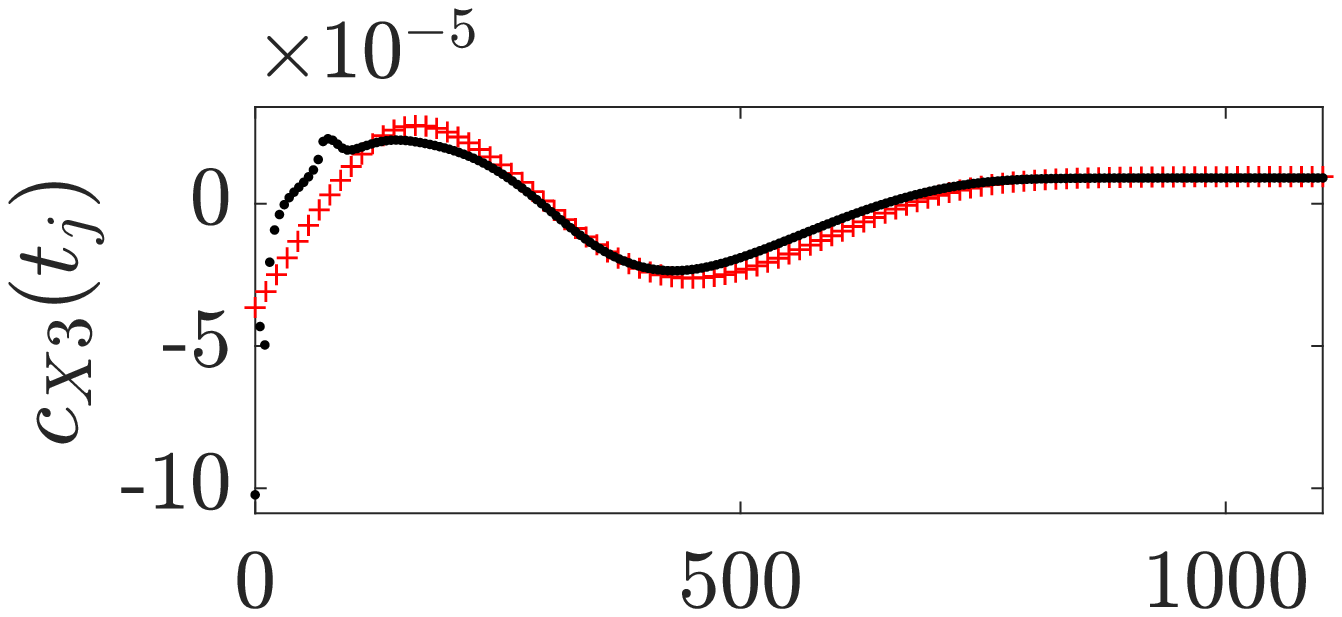}}
	\subfigure{\includegraphics[width=0.35\textwidth]{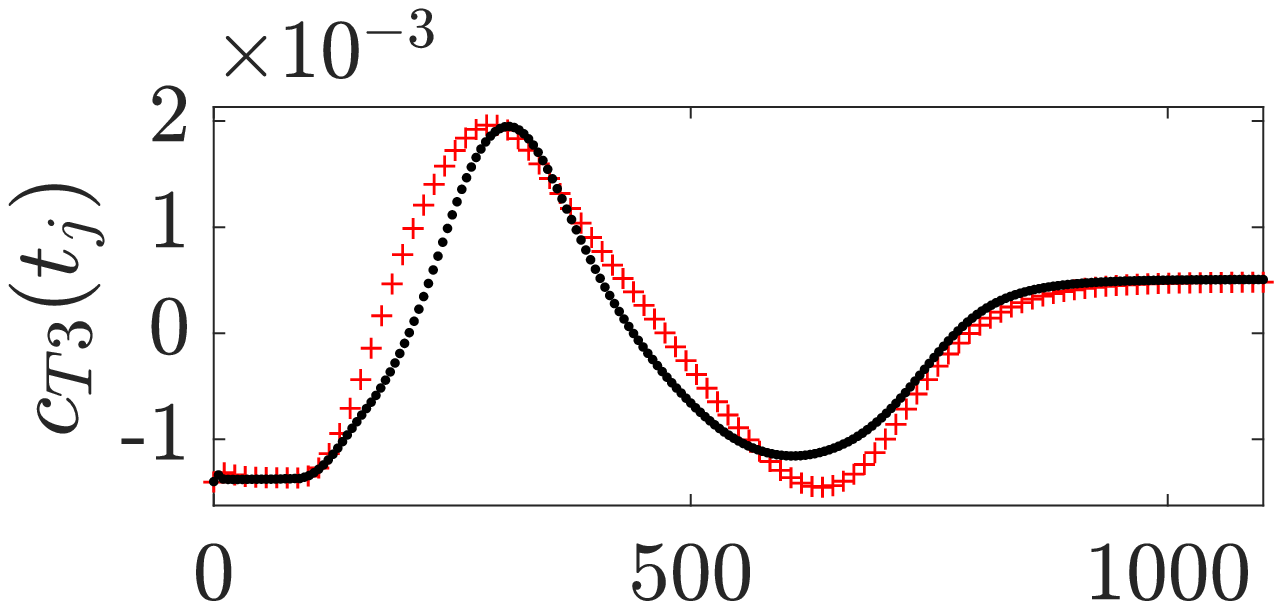}}
	\subfigure{\includegraphics[width=0.35\textwidth]{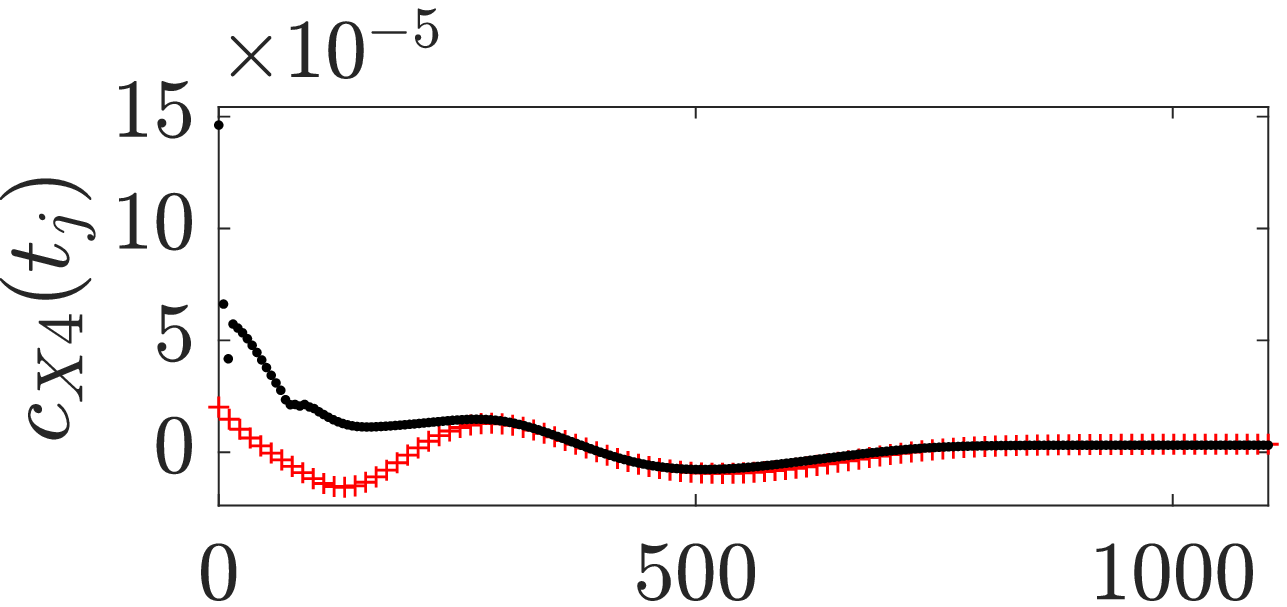}}
	\subfigure{\includegraphics[width=0.35\textwidth]{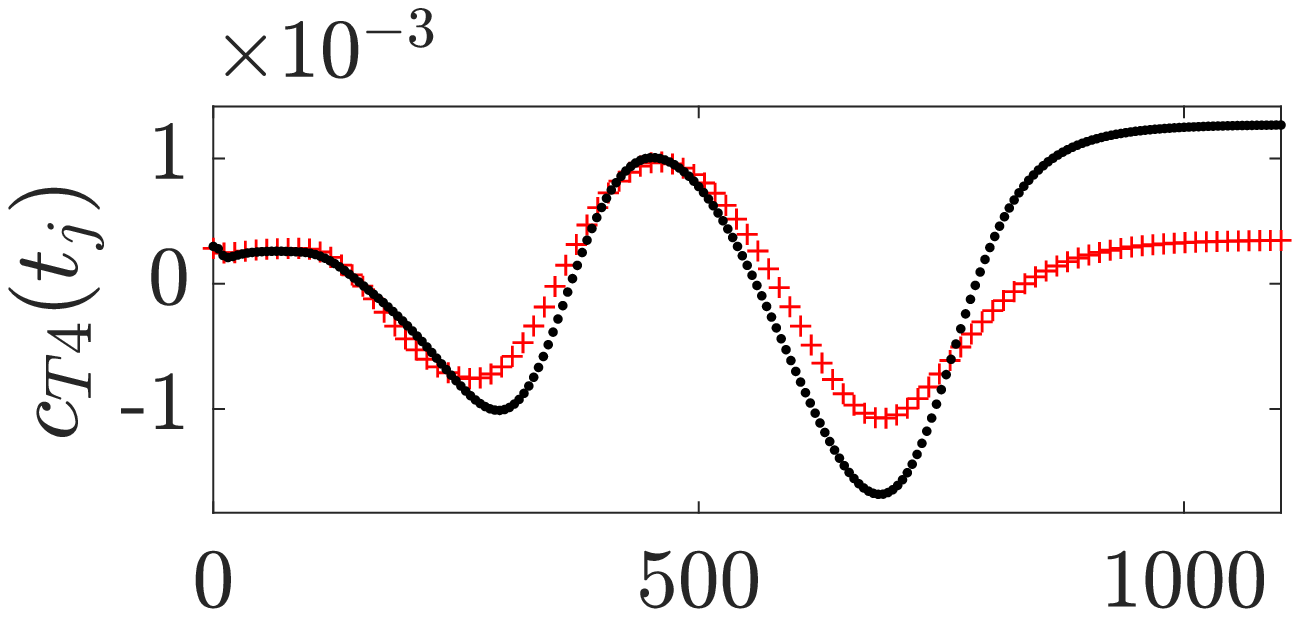}}
	\subfigure{\includegraphics[width=0.35\textwidth]{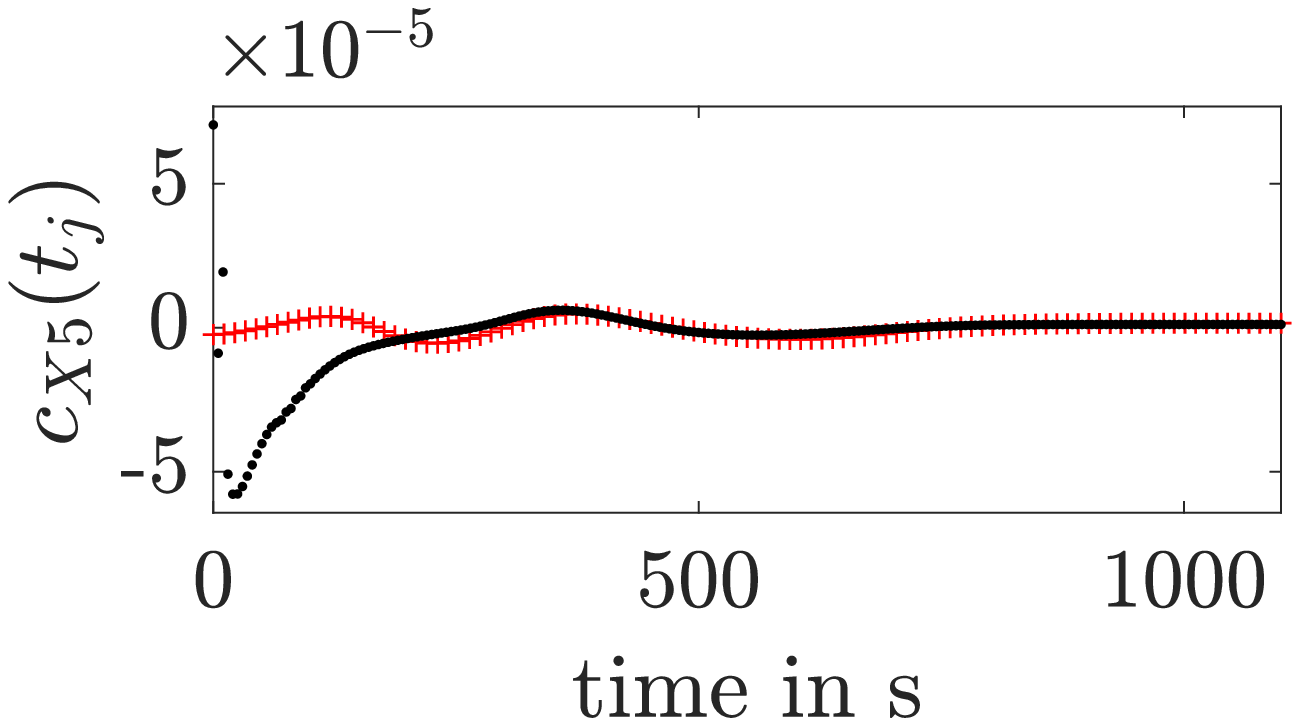}}
	\subfigure{\includegraphics[width=0.35\textwidth]{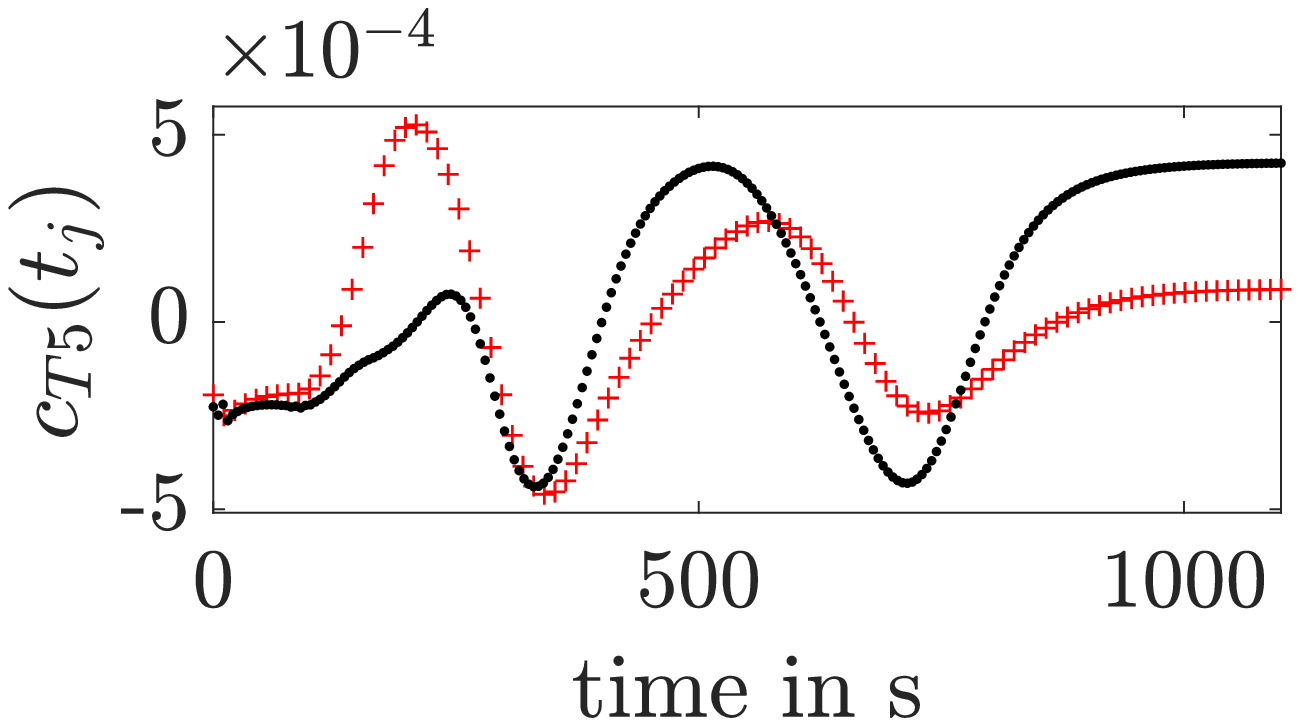}}
	\caption{Kalman filter estimation (black) for moisture (left) and temperature (right) related coefficients. POD coefficients (red) for comparison.}
	\label{fig:EKFCoeffInitMeasurement}
\end{figure}

\begin{figure}[t]
	\centering
	\subfigure{\includegraphics[width=0.7\textwidth]{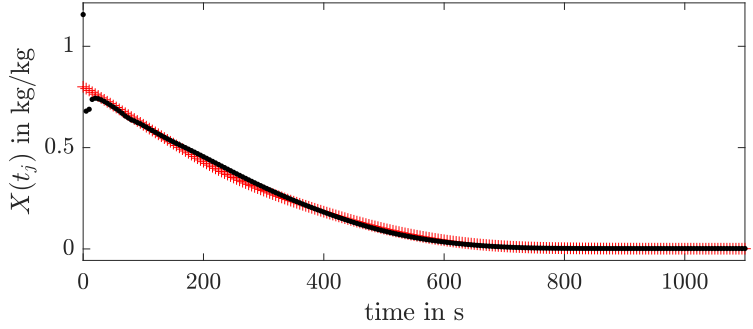}}
	\caption{Total moisture estimate (black) and true values (red). The EKF is initialized with temperature measurements and an initial guess of $x(y_i,t=0)=\unit[1]{kg/kg}$ for the moisture.}
	\label{fig:EKFEstTotMoistureInitMeasurement}
\end{figure}

\begin{figure}[t]
	\centering
	\subfigure{\includegraphics[width=0.7\textwidth]{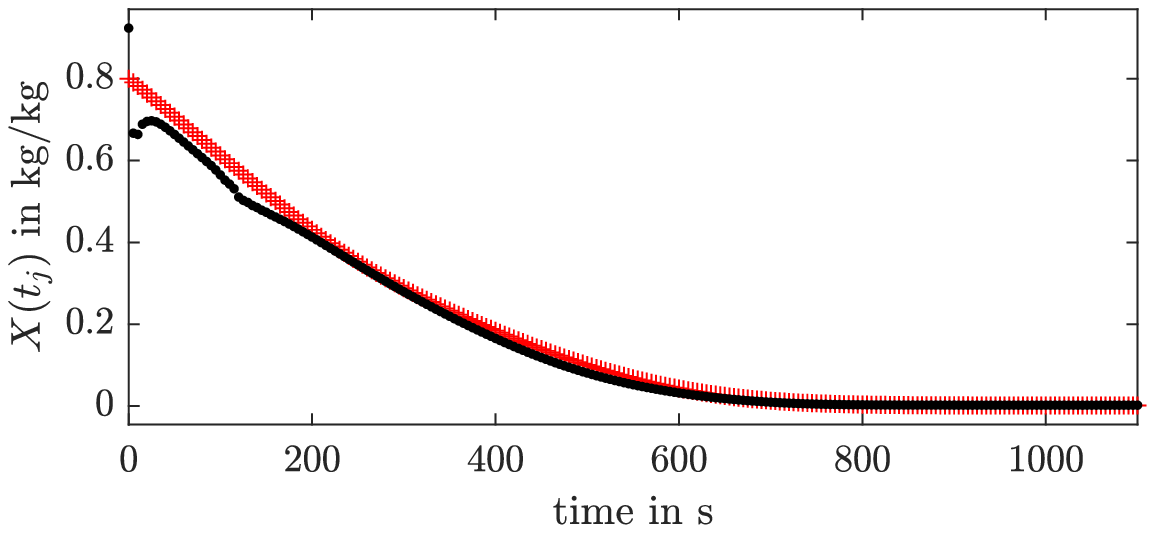}}
	\caption{The total moisture $X(t_j)$ (red) is compared to its EKF estimate (black). The EKF is initialized with temperature measurements and an initial guess of $x(y_i,t=0)=\unit[0.6]{kg/kg}$ for the moisture.}
	\label{fig:EKFEstimatedTotalMoisture}
\end{figure}

Measurements for \eqref{eqn:Output} are taken every $\unit[5]{s}$ at times $t_j$, $j=1,\ldots,m$ until a steady state is reached after $\unit[1100]{s}$ ($m=220$). The computation time required for the EKF, i.e., the equations stated in Appendix B including the reduced order model, is well below $\unit[5]{s}$.\footnote[2]{Specifically, $\unit[0.038]{s}$ are required with a matlab implementation on a desktop PC with an Intel i7-6700 CPU running at 3.4 GHz.}

It is an obvious choice to use 
the measurement \eqref{eqn:Output} at $t_0=0$ as an initial guess for the temperature states. 
The desired states are determined by $\hat{c}_{0,T,k}= \textstyle\sum_{i=1}^{N} \big( w(t=0)- \bar{T}(y_i)\big) \, \varphi_{T,k}(y_i) \Delta V$ according to \eqref{eqn:TimeCoeff}
for a particle with an assumed homogeneous temperature distribution $T(y_i,t=0)=w(t=0)$ at all $y_i$, $i=1,\ldots,N$, 
The initial state for the moisture cannot be based on measurements, since we only measure temperatures. 
We overestimate and underestimate the true initial moisture by $25\;\%$ and show the estimation using the correct initial moisture $x(y_i,t=0)=0.8$ for comparison.

An overestimation of the initial moisture by $25\%$ yields $x(y_i,t=0)=1$ for all $y_i$ and results in $\hat{c}_{0,x,k}= \textstyle\sum_{i=1}^{N} \big( 1 - \bar{x}(y_i)\big) \, \varphi_{x,k}(y_i) \Delta V$ for the initial state for the moisture.
We choose $Q=I_n$, $R=1$, $P_0= 2\times 10^2\cdot I_n$ for the process, measurement noise and error covariance matrices, respectively, where $I_n$ refers to the $n\times n$ unity matrix.

The resulting state estimate $\hat{c}(t_j)$ is shown in Figure~\ref{fig:EKFCoeffInitMeasurement}. 
The coefficients of \eqref{eqn:SumLinApprox} and \eqref{eqn:SumLinApproxWater} obtained by POD are shown for comparison. We observe good agreement for the most important modes. The extended Kalman filter needs about $\unit[200]{s}$ to converge to the true estimate for some moisture related states, while all temperature states converge practically immediately. This is in agreement with the observations of Section~\ref{subsec:OBSVDryingProcess}, where we found weaker observability for the moisture distribution. After $\unit[200]{s}$, the estimated states match well for both temperature and moisture. Only some deviations can be observed for higher order modes.

When combined with the POD basis, the states
$\hat{c}(t_j)$ provide an estimate of the temperature and moisture distribution on the whole domain $\Omega$. 
We use the  normalized root mean square error \eqref{eqn:MeanRelativeError} to evaluate the quality of the estimation with the reduced order model, where $c(t_j)$ is substituted by $\hat{c}(t_j)$ in \eqref{eqn:AbsError}. This results in $\varepsilon_T(y_i,t_j)=1.7\%$  for the temperature and $\varepsilon_x(y_i,t_j)=8.4\%$ for the moisture estimation. 

The resulting estimate for the total moisture~\eqref{eqn:TotalMoisture} is shown in Figure~\ref{fig:EKFEstTotMoistureInitMeasurement}. 
Deviations occur only during the first $\unit[10]{s}$ and all values match well for the remaining time span. The %mean relative error 
normalized root mean square error amounts to $\varepsilon_X =3.5\%$. The error induced by the total moisture $X(t)$ is smaller than the error induced by the spatially distributed moisture $x(y,t)$ due to the averaging nature of \eqref{eqn:TotalMoisture}.
We conclude the errors are in an acceptable range and the presented method provides a good estimation of the inner particle moisture during the drying process. 

We now consider the underestimation of the initial moisture. In this case $x(y_i,t=0)=0.6$ for all $y_i$, which results in $\hat{c}_{0,x,k}= \textstyle\sum_{i=1}^{N} \big( 0.6 - \bar{x}(y_i)\big) \, \varphi_{x,k}(y_i) \Delta V$ for the initial state for the moisture. We choose $P_0=83.5 \cdot I_n$ for fast convergence. We briefly note that the Kalman filter also converges for other $P_0$, but generally $P_0$ is chosen larger when we have only little confidence in the initial state guesses $\hat{c}_0$.
The total moisture estimate is depicted in Figure~\ref{fig:EKFEstimatedTotalMoisture}, which shows that the EKF converges after about $\unit[200]{s}$. 
The normalized root mean square errors are about as large as for the overestimated initial moisture. Specifically, we obtain $\varepsilon_x = 8.2\%$ and $\varepsilon_X =3.25\%$ for the errors of the moisture and total moisture, respectively, and $\varepsilon_T =2.1\%$ for the temperature error.
We claim without giving details that shorter sampling time or other outputs than \eqref{eqn:Output}, e.g., a higher dimensional output that considers the spatially resolved surface temperature, do not yield better results. The increased errors therefore can be accounted to the worse initial guess. 

%\begin{figure}[h!]
%	\centering
%	\subfigure{\includegraphics[width=0.35\textwidth]{Figs/EKFCoeffX1-CorrectInit.eps}}
%	\subfigure{\includegraphics[width=0.35\textwidth]{Figs/EKFCoeffT1-CorrectInit.eps}}
%	\subfigure{\includegraphics[width=0.35\textwidth]{Figs/EKFCoeffX2-CorrectInit.eps}}
%	\subfigure{\includegraphics[width=0.35\textwidth]{Figs/EKFCoeffT2-CorrectInit.eps}}
%	\subfigure{\includegraphics[width=0.35\textwidth]{Figs/EKFCoeffX3-CorrectInit.eps}}
%	\subfigure{\includegraphics[width=0.35\textwidth]{Figs/EKFCoeffT3-CorrectInit.eps}}
%	\subfigure{\includegraphics[width=0.35\textwidth]{Figs/EKFCoeffX4-CorrectInit.eps}}
%	\subfigure{\includegraphics[width=0.35\textwidth]{Figs/EKFCoeffT4-CorrectInit.eps}}
%	\subfigure{\includegraphics[width=0.35\textwidth]{Figs/EKFCoeffX5-CorrectInit.eps}}
%	\subfigure{\includegraphics[width=0.35\textwidth]{Figs/EKFCoeffT5-CorrectInit.eps}}
%		\caption{The EKF estimation (black) is compared to POD coefficients (red) for moisture (left) and temperature (right). POD coefficients were used as initial guess in the EKF. }
%	\label{fig:EKFCoeff3}
%\end{figure}

The last case uses the true initial temperature and moisture values as an initial guess. The $\hat{c}_0$ are taken from the snapshots of the POD decompositions \eqref{eqn:SumLinApprox} and \eqref{eqn:SumLinApproxWater} for $t=\unit[0]{s}$. For $P_0=0_n$, the estimated total moisture is presented in Figure~\ref{fig:EKFEstimatedTotalMoisture3}. We observe good agreement of the estimated and true total moisture. Only little deviations are observable during the first $\unit[100]{s}$.
The normalized root mean square error amounts to $\varepsilon_X =2.4\%$ for the total moisture and $\varepsilon_T =1.8\%$ for the temperature.
The error for the moisture is $\varepsilon_x =3.1\%$ and, therefore, nearly one third of the error in the first case with measurements used for the initial state guess.

\begin{figure}[t]
	\centering
	\subfigure{\includegraphics[width=0.7\textwidth]{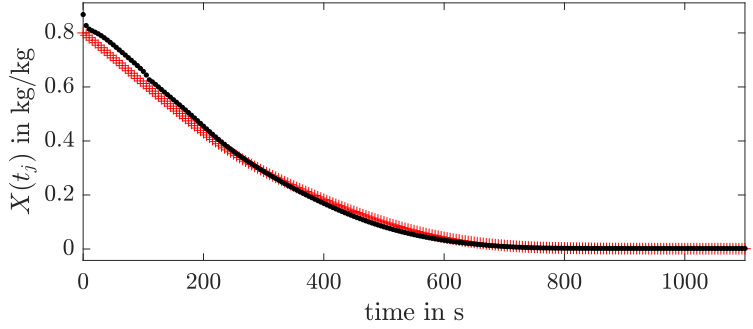}}
	\caption{Total moisture estimate (black) and true values (red). POD coefficients were used as initial guess in the EKF.}
	\label{fig:EKFEstimatedTotalMoisture3}
\end{figure}

\section{Conclusion}\label{sec:Outlook}
We showed the moisture content of wood chips can be estimated with reduced order models. 
Our observability analysis and computational experiments reveal the moisture of the particle can be determined from surface temperature measurements alone. 
In particular, no moisture measurements are required, which would render the approach impractical for an industrial use. 
Because calculations with PDEs can be avoided after the model reduction, 
the extended Kalman filter is computationally lean and therefore a viable candidate for a practical implementation.

%
%The filter does not required any moisture measurements, but temperature measurements on a partial particle face suffice, which imply the proposed method is a viable candidate for practical applications.  
%
%A POD and Galerkin based model reduction approach was used to determine a reduced order model for the drying process of wood chips. The reduced order model consists of coupled nonlinear ODEs, where a reduced order model of order ten proved sufficient to describe the moisture and temperature for the drying process under consideration. The reduced order model was ultimately used for an observability analysis and a state observer. We used the eigenvalues of the empirical observability Gramian as an observability measure to determine optimal sensor locations. The reduced order model proved computationally efficient for the use in an extended Kalman filter that was used as a state observer. We reconstructed the temperature and moisture distribution inside a wood chip from measurements on the partially obstructed particle surface.

\appendix
\section*{Appendix A}\label{appx:A}
The volume integral of the inner product in \eqref{eqn:PDEProjection2} is transformed into a surface integral with the help of Gauss's theorem. The boundary condition \eqref{eqn:PDEBCT} then appears explicitly in \eqref{eqn:PDEProjection2} (see \cite{Berner2017} for details). We obtain 
\begin{align}
%&f_{\text{ROM,T},l} \approx -\textstyle\int_\Omega  \Big(\lambda \nabla \big( \bar{T}+\textstyle\sum_{k=1}^{n_T}\varphi_{T,k}c_{T,k}\big)\Big) \cdot \nabla \varphi_{T,l} \diff V + \nonumber \\
%& \textstyle\int_{\partial \Omega} \varphi_{T,l} \, s^{-1}\, \Gamma_{T} \big(\bar{x}+\textstyle\sum_{k=1}^{n_x}\varphi_{x,k}c_{x,k}, \, \bar{T}+\textstyle\sum_{k=1}^{n_T}\varphi_{T,k}c_{T,k} \big) \diff S 
&f_{\text{ROM,T},l} \approx -\textstyle\int_\Omega  \Big(\lambda \nabla \big( \bar{T}+\textstyle\sum_{k=1}^{n_T}\varphi_{T,k}c_{T,k}\big)\Big) \cdot \nabla \varphi_{T,l} \diff V + \nonumber \\
& \textstyle\int_{\partial \Omega} \varphi_{T,l} \, s^{-1}\, \Big( J_T \big(\bar{x}+\textstyle\sum_{k=1}^{n_x}\varphi_{x,k}c_{x,k}, \, \bar{T}+\textstyle\sum_{k=1}^{n_T}\varphi_{T,k}c_{T,k} \big) +\nonumber\\
&\Delta h_\mathrm{v} \varrho_\mathrm{d} J_{T,\mathrm{v}} \big(\bar{x}+\textstyle\sum_{k=1}^{n_x}\varphi_{x,k}c_{x,k}, \, \bar{T}+\textstyle\sum_{k=1}^{n_T}\varphi_{T,k}c_{T,k} \big)  \Big) \diff S  \label{eqn:ROMGaussT}
\end{align}
$l=1,\ldots,n_T$, for the temperature and
\begin{align}
&f_{\text{ROM,x},l}\approx -\textstyle\int_\Omega  \Big(\delta \nabla \big( \bar{x}+\textstyle\sum_{k=1}^{n_x}\varphi_{x,k}c_{x,k}\big)\Big) \cdot \nabla \varphi_{x,l} \diff V + \nonumber\\
& \textstyle\int_{\partial \Omega} \varphi_{x,l} \, J_{x} \big(\bar{x}+\textstyle\sum_{k=1}^{n_x}\varphi_{x,k}c_{x,k}, \, \bar{T}+\textstyle\sum_{k=1}^{n_T}\varphi_{T,k}c_{T,k} \big) \diff S   \label{eqn:ROMGaussX}
\end{align}
$l=1,\ldots,n_x$, for the moisture. The system of $n = n_T+n_x$ ODEs constitutes the reduced model. Note that all equations \eqref{eqn:ROMGaussT} and \eqref{eqn:ROMGaussX} are coupled since both, temperature and moisture related states, i.e., $c_{T,k}$ and $c_{x,k}$, respectively, appear in all ODEs. Furthermore, \eqref{eqn:ROMGaussT} and \eqref{eqn:ROMGaussX} are nonlinear due to the nonlinearity of the boundary conditions \eqref{eqn:PDEBC}.

\section*{Appendix B}\label{appx:B}
We initialize the state and covariance estimate with $\hat{c}_{0|0}=\hat{c}_0$ and $P_{0|0}=P_0$,
where we denote the estimation of a variable $a(t_i)$ estimated at time $t_j$ by $a_{i|j}$. 
For every time step $k=1,\ldots,m$ we perform the following steps. The prediction step is based on the state and covariance estimates
\begin{align*}
\hat{c} (t_{k-1}) &= \hat{c}_{k-1|k-1} \\
P(t_{k-1}) &= P_{k-1|k-1},
\end{align*}
respectively, of the previous time step at $t_{k-1}$. Then
\begin{align*}
\dot{\hat{c}} (t)&= f_\text{ROM}\big( \hat{c}(t) \big)\\
\dot{P}(t) &= F(t)P(t) + P(t)F(t)^\T+ Q(t)
\end{align*}
are solved to predict the state and covariance estimates 
\begin{align*}
\hat{c}_{k|k-1} &= \hat{c}(t_k) \\
P_{k|k-1} &= P(t_k),
\end{align*}
respectively, at the current time step $t_k$. The Jacobian of the system reads $F(t) = \frac{\partial f_\text{ROM}}{\partial c(t)} \big \vert_{\hat{c}(t)}$. The state and covariance are updated by
\begin{align*}
\hat{c}_{k|k} &= \hat{c}_{k|k-1} + K_k \Big(w_k - h\big( \Phi\hat{c}_{k|k-1} +\bar{z}\big)\Big)\\
P_{k|k} &= (I - K_k H_k) P_{k|k-1},
\end{align*}
where the Kalman gain and the Jacobian of the output function read
\begin{align*}
K_k &= P_{k|k-1}H_k^\T\big(H_k P_{k|k-1} H_k^\T + R_k \big)^{-1}\\
 H_k &= \frac{\partial h\big( \Phi c(t) +\bar{z}\big)}{\partial c(t) } \bigg \vert _{\hat{c}_{k|k-1}},
\end{align*}
respectively. The time series $\hat{c}(t_k) = \hat{c}_{k|k}$, $k=1,\ldots,m$, then yields the desired state estimate of \eqref{eqn:PODGalModel}.

 \bibliographystyle{plain} 
\bibliography{BernerO2020}         

\end{document}